\newtheorem{lemma}{Lemma}[section]
\newtheorem{thm}[lemma]{Theorem}
\newtheorem{cor}[lemma]{Corollary}
\newtheorem{prop}[lemma]{Proposition}
\theoremstyle{definition}
\newtheorem{definition}[lemma]{Definition}
\newtheorem{remark}[lemma]{Remark}
\newtheorem{example}[lemma]{Example}
\theoremstyle{remark}
\newcommand{\mb}{\mathbf}
\newcommand{\pr}[1]{\mathbb{P}^{#1}}
\newcommand{\A}{\mathcal{A}}
\newcommand{\D}{\mathcal{D}}
\newcommand{\Z}{\mathbb{Z}}
\newcommand{\Q}{\mathbb{Q}}
\newcommand{\R}{\mathbb{R}}
\newcommand{\K}{\mathbb{K}}
\newcommand{\N}{\mathbb{N}}
\newcommand{\T}{\mathbb{T}}
\newcommand\opJ{\mathop{\rm J}\nolimits}
\newcommand\bP{{\mathbb P}}
\newcommand{\im}{\operatorname{Im}}
\newcommand{\rk}{\operatorname{rk}}
\newcommand{\Ker}{\operatorname{Ker}}
\newcommand{\into}{\hookrightarrow}
\renewcommand{\geq}{\geqslant}
\renewcommand{\leq}{\leqslant}
\newcommand{\B}{\mathcal{B}}
\newcommand{\Cayley}{\operatorname{Cayley}}
\title[Higher order selfdual toric varieties]{Higher order selfdual toric varieties}
\author[A.~Dickenstein]{Alicia Dickenstein}
\address{Alicia Dickenstein \\ Department of Mathematics, FCEN \\
Universidad de Buenos Aires \\ and IMAS - CONICET \\ Ciudad Universitaria - Pab. I
\\ C1428EGA Buenos Aires \\Argentina}
\email{\href{mailto:alidick@dm.uba.ar}{alidick@dm.uba.ar}}
\urladdr{\href{http://mate.dm.uba.ar/~alidick}{mate.dm.uba.ar/~alidick}}
\author[R.~Piene]{Ragni Piene}
\address{Ragni Piene\\Department of Mathematics\\
University of Oslo\\P.O.Box 1053 Blindern\\NO-0316 Oslo\\Norway}
\email{\href{mailto:ragnip@math.uio.no}{ragnip@math.uio.no}}
\urladdr{\href{http://www.mn.uio.no/math/english/people/aca/ragnip/index.html}
{www.mn.uio.no/math/english/people/aca/ragnip/index.html}}
\begin{document}



\begin{abstract}
The notion of higher order dual varieties of a projective variety, introduced in \cite{P83}, 
is a natural generalization of the classical notion of projective duality. In this paper we
present geometric and combinatorial characterizations of those equivariant
projective toric embeddings that satisfy higher order selfduality. 
We also give several examples and general constructions. In particular, 
we highlight the relation with Cayley-Bacharach questions and with Cayley configurations.
\end{abstract}

\maketitle

\section{Introduction} \label{sec:intro}
Let $X\subset \pr{N}$ be a projective complex algebraic 
variety of dimension $n$ over an algebraically closed field  $\K$
of characteristic zero, and fix $k \in \N$. 
 A hyperplane $H$ is said to 
be \emph{tangent to $X$ to the order $k$}  at a smooth point $x$,
when $H$ contains the $k$th osculating space to $X$ at $x$ 
(see Section~\ref{sec:higher} for a precise definition).
The {\em $k$-th dual variety} $X^{(k)}$  is the Zariski closure in the dual projective
space ${(\pr{N})}^\vee$ of all hyperplanes {tangent 
to $X$ to the order $k$} at some smooth point $x$.
In particular, the first osculating space is the tangent space 
and $X^{(1)}$ is the classical dual variety.

If the dimension $d_k$ 
of the $k$th osculating space at a general point of $X$ 
is strictly smaller than the ambient dimension $N$,  
the \emph{expected} dimension of $X^{(k)}$ equals $n +N- d_k-1$, 
which is at least $n$.
Note that the \emph{actual} dimension might be smaller than $n$.

\begin{definition}\label{def:kself}
A projective variety $X\subset \mathbb P^N$ is said to be $k$-selfdual if there exists 
a linear isomorphism $\varphi\colon \mathbb P^N \to (\mathbb P^N)^\vee$ such that 
$\varphi (X)=X^{(k)}$.
\end{definition}
In particular, if $X$ is $k$-selfdual, then $\dim X=\dim X^{(k)}=n$. 
We will be concerned with the characterization of these 
special varieties for equivariant toric embeddings. 
Toric $1$-selfdual varieties were studied in \cite{BDR11}, 
while $k$th duals  $X^{(k)}$ of projective toric 
varieties $X$ were studied in \cite{DDP11}.

An equivariantly embedded projective toric variety (not necessarily normal) is 
rationally parameterized by monomials with exponents given by
a lattice configuration:
\begin{equation*}
 \A = \{a_0, \dots, a_N\} \subset \Z^n,
\end{equation*}
besides some zero coordinates (see Proposition~1.5 in Chapter 5 of \cite{GKZ}).
We denote  by $X_\A \subset \pr{N}$ the projective toric variety rationally 
parametrized by $t \mapsto (t^{a_0}: \dots : t^{a_n})$. 
We note that  by Lemma~2.14 in \cite{BDR11}, 
$X_\A$ is non degenerate (i.e., \ not contained in any hyperplane)
 if and only if the points $a_i$ are distinct, 
and we will always assume this holds. A lattice configuration $\A$ is said to be \emph{complete}
if it consists of all the lattice points in its convex hull.  
Complete configurations $\A$ correspond to toric varieties embedded by a complete linear system.

We investigate different characterizations of higher selfdual toric varieties $X_\A$ associated to a finite lattice configuration. 
Section~\ref{sec:higher} deals with a first characterization of higher selfduality in terms of the torus action
associated to $\A$. 
\, Theorem~\ref{th:dim}, which is a generalization of Theorems~3.2 
and~3.3 in \cite{BDR11}, asserts that a non-degenerate projective toric variety $X_\A$ is $k$-selfdual if and only if
$\dim X_\A = \dim X^{(k)}_\A$ and $\A$ is $k$nap (see Definition~\ref{def:knap}). 
The main result in Section~\ref{sec:character} is Theorem~\ref{thm:eL}, 
which characterizes $k$-selfduality in combinatorial terms.  
Section~\ref{sec:surfaces}, showcases different examples in the surface case which reveal the impossibility of an exhaustive classification.
Section~\ref{sec:general} contains general constructions of higher selfdual varieties. In particular, 
we highlight the relation with Cayley-Bacharach questions~\cite{EGH} and with Cayley configurations (see Definition~\ref{def:Cayley}). 
We  show that any general configuration of $\binom{n+k}{k}+1$ points is $k$-selfdual (see Corollary~\ref{cor:general}) 
and that Veronese-Segre embeddings give smooth selfdual toric varieties in any dimension (see Theorem~\ref{th:complete}).

\medskip

\noindent{\bf Acknowledgements:}
We thank David Perkinson for sending us the reference to J. Mulliken's thesis, which was the source
of several examples. We are also grateful to Emilia Mezzetti for very useful conversations.
 A.~Dickenstein is partially supported by UBACYT
  20020100100242, CONICET PIP 112-200801-00483 and ANPCyT 2013-1110
  (Argentina).
  She also acknowledges the support of the M.~Curie
Initial Training Network ``SAGA''  that made possible her visit 
to the Centre of Mathematics for Applications (CMA) of the University of Oslo, in May 2012, 
where this work was initiated,
and to the International Center for Theoretical Physics (ICTP), where it was finished. 
R. Piene acknowledges support from the Research Council of Norway, project number 239015. 
Both authors thank the organizers of the CIMPA research school II ELGA in Cabo Frio, Brazil, 2015 
and the CMO--BIRS workshop Algebraic Geometry and Geometric Modeling in Oaxaca, 
Mexico, 2016 for the invitations and support.

\medskip

\section{Higher order duals and torus orbits}\label{sec:higher}

The main result of this section is Theorem~\ref{th:dim}, where we give a first characterization of
higher order selfdual toric varieties $X_\A$ deduced from 
the description of $X_\A^{(k)}$ in terms of  the torus action 
defined by $\A$ (see~\eqref{eq:action},~\eqref{eq:actiondual}).

\subsection{Higher duals of projective varieties}
We recall here some basic facts and we refer the reader 
to \cite{BDR11}, \cite{DDP11} for more details.

Let $\iota\colon X\into \pr{N}$ be an 
embedding of a complex non degenerate irreducible algebraic
variety of dimension $n$. 
Let  $(x_1,\dots ,x_n)$ be a local system of coordinates around  a smooth point $x \in X$, 
with maximal ideal ${\mathfrak m}_x=(x_1,\dots  ,x_n)$ 
in the local ring ${\mathcal O}_{X,x}$. Let $\mathcal
L:=\iota^*({\mathcal O}_{\pr{N}}(1)).$ 
 The quotient vector space $\mathcal L/{\mathfrak m}_{x}^{k+1}\mathcal L$
is the fibre at $x$ of the $k$-th principal parts (or jet)
sheaf ${\mathcal P}_{X}^k(\mathcal L)$. 
The  $k$-th jet map (of coherent sheaves) 
$j_k: {\mathcal O}_X^{N+1}\to  {\mathcal P}_{X}^k(\mathcal L)$
is given fiberwise by the linear map  $
j_{k,x}$ induced by the map of ${\mathcal O}_{X}$-modules
$\mathcal L\to \mathcal L/{\mathfrak m}_x^{k+1}\mathcal L$, which sends
 $s \in {\mathcal L}_x$ to its Taylor series expansion 
up to order $k$  with respect to the local coordinates $x_1,\ldots,x_n$.
Thus, $\pr{}(\im(j_{1,x}))=\pr{}({\mathcal P}_{X}^1(\mathcal L)_x)={\T}_{X,x}$ 
is the embedded tangent space at the point $x$. 
More generally,
the linear space $\pr{}(\im(j_{k,x}))$ is called the
\emph{$k$-th osculating space} of the embedded variety $X$  at $x$ and it is denoted by ${\T}^{k}_{X,x}$. 
The  dimension of the $k$th osculating spaces is at most $\binom{n+k}k-1$.

The variety $X$ is called \emph{generically  $k$-jet spanned}
if equality holds for almost all smooth points $x\in X$. Let $X_{k{\rm -cst}}$ 
denote the open dense subset of $X$ where 
the rank of $j_k$ is constant; denote this rank by $d_k+1$.

\begin{definition} \label{def:Xk}
A hyperplane $H$  is tangent to $X$ to order $k$  at a point
 $x\in X_{k{\rm -cst}}$ if $\T_{X,x}^{k}\subseteq H$.
The $k$-th dual variety $X^{(k)}$ is 
\begin{equation}\label{eq:Xk}
X^{(k)}:={\rm closure}\{H\in ({\pr{N}})^\vee\; |\, H\supseteq \T^k_{X,x}
\text{ for some } x\in X_{k{\rm -cst}}\}.
\end{equation}
\end{definition}

\noindent It follows that $X^{(k)}$ 
is the closure of the image of the map
\begin{equation}\label{eq:mapkdual}
 \mathbb P((\Ker  j_k)^\vee|_{X_{k{\rm -cst}}})
\to ({\mathbb P^N})^\vee,
\end{equation}
and hence is irreducible.
The higher order dual varieties $X^{(k)}$ 
for $k \ge 2$ are contained in the singular locus of the dual variety $X^\vee=X^{(1)}$. 
\medskip

\subsection{Higher duals of toric varieties and $k$nap configurations}
Consider a lattice configuration
$ \A = \{a_0, \dots, a_N\} \subset \Z^n$
 and let $X_\A\subset \mathbb P^N$ be the corresponding toric variety.
The variety $X_\A$ is an affine invariant of the 
configuration $\A$ by Proposition II.5.1.2 in \cite{GKZ}  (see also Section 2 in \cite{BDR11}) 
and the dimension of $X_\A$ equals the dimension of the affine span of $\A$.
Thus we can replace, without 
loss of generality, our configuration by the affinely isomorphic lattice configuration 
\begin{equation}\label{eq:A}
\{(1,a_0), \dots, (1,a_N)\}
\subset \Z^{n+1}.
\end{equation}
We will assume that the lattice configurations $\A$ we consider are of the form~\eqref{eq:A}.
We will denote by $A$ the  integer matrix of size $(n+1)\times (N+1)$ with columns
$\A=\{(1,a_0), \dots, (1,a_N)\}$, which has rank $n+1$. 
Up to replacing $\Z^{n+1}$ by the lattice spanned by
the points in $\A$, we can assume that $\Z \A = \Z^{n+1}$, or equivalently, that the greatest
common divisor of the maximal minors of $A$ equals $1$. Thus, we will assume
that $\dim(X_\A)=n$.

\begin{definition}\label{def:Ak}
Given any matrix $A$ as above, denote by $\mb v_0 =(1, \ldots, 1), \mb v_1,
 \dots, \mb v_n \in \Z^{N+1}$ the row vectors of $A$.
For any $\alpha \in \N^{n+1}$, 
denote by $\mb v_\alpha \in \Z^{N+1}$ the vector obtained as the evaluation
of the monomial $x^\alpha$ in the points of $\A$, that is, the \emph{coordinatewise product} of 
($\alpha_0$ times the row vector $\mb v_0$)  times ($\alpha_1$ times the row vector $\mb v_1$)
 times  $\ldots$ times ($\alpha_n$ times the row 
vector $\mb v_n$).  For any $k$,  we define the associated matrix $A^{(k)}$ as follows. 
Order the vectors $\{\mb v_\alpha : |\alpha| \le k\}$ by degree and then by lexicographic
order with $0 > 1 > \dots > n$, and let  $A^{(k)}$ be the $\binom{n+k}k \times (N+1)$ 
integer matrix with these  rows.
As $\mb v_0 =(1, \ldots, 1)$,  the first $n+1$ row vectors of $A^{(k)}$ 
are just the row vectors $\mb v_0, \dots, \mb v_n$ of $A$.
\end{definition}

\begin{example}
For $k=2$, we get

{\tiny
\begin{equation} \label{eq:matrixA2}
A^{(2)} \, = \, \left(
\begin{array}{ccc} 
{} &\mb v_{(2,0,\ldots,0)} &{}\\
{} &\mb v_{(1,1,0,\ldots,0)} &{}\\
{}& \vdots& {}\\
{} & \mb v_{(1,0,\ldots,0,1)} & {}\\
{} & \mb v_{(0,2,0,\ldots,0)} & {} \\
{} & \mb v_{(0,1,1,0,\ldots, 0)} & {} \\
{} & \vdots & {} \\
{} & \mb v_{(0,\ldots, 0,1,1)} \\
{} & \mb v_{(0,\ldots,0,2)} & {}
\end{array}\right).
\end{equation}
}

For example, if $n=1$ and $\mb v_1=(0,1,\ldots,N)$, then

\[{ A^{(2)}}=\left(
\begin{array}{cccccc} 1&1&1&1& \cdots &1\\
0&1&2&3& \cdots & N\\
0& 1& 4&9& \cdots & N^2
\end{array}\right).
\] 

If instead, $n=2$ and 
\[ A = \left(
\begin{array}{cccc}
1&1&1&1 \\
0&1&0&1 \\
0&0&1&1
\end{array} \right)
\]
defines the Segre embedding of $\pr{1} \times \pr{1}$ in $\pr{3}$,  the matrix
$A^{(2)}$ has $6$ rows, and (avoiding repeated rows) 
it is affinely equivalent to the lattice configuration read in the columns of the following matrix:
\[A'^{(2)} =
\left(
\begin{array}{cccc}
1&1&1&1 \\
0&1&0&1 \\
0&0&1&1 \\
0&0&0&1
\end{array} \right).
\]
\end{example}
\medskip

A lattice configuration  $\A$ with cardinality $N+1$ defines a torus action of the $n$-torus
on $\pr{N}$ as follows:
\begin{equation}\label{eq:action}
{\mb t} *_\A {\mb x} = ({\mb t}^{{a}_0}\,x_0:\cdots:{\mb t}^{{a}_N}\, x_N).
\end{equation}
Then, $X_\A={\rm closure}({{\rm Orb_{*_\A}}({\mb 1})})$ is the closure of the orbit of the
point ${\mb 1}= (1, \dots,1)$. 
By considering a (vector space) basis in $\K^{N+1}$ and its dual basis in
$(\K^{N+1})^\vee$, we will identify $\mathbb P^N = {\mathbb P}(\K^{N+1})$ 
with ${\mathbb P}((\K^{N+1})^\vee)= (\mathbb P^N)^\vee$. 
The torus action~\eqref{eq:action} defines an action in the dual space, which is 
given by
\begin{equation}\label{eq:actiondual}
{\mb t}*^\vee_\A{\mb y}= ({\mb t}^{-{a}_0}\, y_0:\cdots:{\mb t}^{-{a}_N}\, y_N).
\end{equation}
If  for any $\mb t$ in the $n$-torus, we denote by $\mb{\frac 1 t}$ its coordinatewise inversion, 
 for any $i=0, \dots, N$,
we have that   ${\mb  t}^{-{a}_i} = ({\mb {\frac 1 t}})^{{a}_i}$.

Given a lattice configuration $\A$ as in \eqref{eq:A} and $k \in \N$, 
the projectivization of the rowspan of $A^{(k)}$  depends on the toric variety $X_\A$ and not on
the choice of the matrix $A$ (and associated matrix $A^{(k)}$). In fact, the 
projectivization  of the rowspan of $A^{(k)}$ equals the $k$-th osculating space $\T^k_{X_\A,{\mathbf 1}}$.
Thus, the rank of $A^{(k)}$ equals the generic rank $d_k+1$ of the $k$-th jet map.
We will denote the dimension of
$\Ker A^{(k)}$ by $c_k$:
\begin{equation}\label{eq:ck}
c_k \, := \, \dim \Ker  A^{(k)} =(N+1)-(d_k+1)= N - d_k.
\end{equation}

\begin{definition}\label{def:knap}
Given a lattice configuration $\A$ as in \eqref{eq:A} and $k \in \N$, 
we say that $\A$ (or the matrix $A$) is $k$nap if
no vector $e_i$, $ i =0,\dots, N$,  in the canonical basis of $\R^{N+1}$ lies in the rowspan of the
matrix $A^{(k)}$, i.e., if the configuration of columns of $A^{(k)}$ is  \emph{not a pyramid} over
one of its points. Clearly, if $\A$ is $k$nap, then it is $k'$nap for all $k'\le k$.
\end{definition}

We will denote by $T_N^\vee$ the torus of $(\mathbb P^N)^\vee$, that is, the open set
formed by the points with all nonzero coordinates.

\begin{lemma}\label{lem:knap}
Let $\A$ be a lattice configuration as in \eqref{eq:A} and $k \in \N$.
Then, the following statements are equivalent:
\begin{enumerate}
 \item  \label{it1} $\A$ is $k$nap.
 \item \label{it2} There exists a point in $\Ker A^{(k)}$ 
 with all nonzero coordinates. That is, the projective linear space
${\mathbb P}({\Ker A^{(k)}})$ meets the torus $T_N^\vee$.
  \item \label{it3} For any $i \in 0, \dots, N$, it is not possible to find a polynomial 
  $Q \in \Q[x_1, \dots, x_n]$
of degree at most $k$ such that $Q(a_j) = 0$ for all $j \not= i$ and $Q(a_i) \neq 0$.
\end{enumerate}
\end{lemma}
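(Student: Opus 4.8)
The plan is to translate everything into a single linear-algebra statement about the rowspan $W := \operatorname{rowspan}(A^{(k)}) \subseteq \K^{N+1}$. By the discussion preceding Definition~\ref{def:knap}, the rows $\mb v_\alpha$ of $A^{(k)}$ are exactly the evaluation vectors $(x^\alpha(a_0), \dots, x^\alpha(a_N))$, and since the first coordinate of every point of $\A$ equals $1$, these span precisely the space of vectors $(Q(a_0), \dots, Q(a_N))$ with $Q \in \K[x_1,\dots,x_n]$ of degree at most $k$. With this identification, $e_i \in W$ means exactly that there is such a $Q$ with $Q(a_j)=0$ for all $j\neq i$ and $Q(a_i)=1$; rescaling shows this is equivalent to the existence of a degree-$\leq k$ polynomial with $Q(a_j)=0$ for $j\neq i$ and $Q(a_i)\neq 0$. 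Because $W$ is spanned by integer vectors and each $e_i$ has integer entries, membership $e_i\in W$ can be tested over $\Q$, so the equivalence (a)$\Leftrightarrow$(c) is just an unwinding of the definition of $k$nap.

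For the remaining equivalence I would use that $\Ker A^{(k)} = W^\perp$ with respect to the standard pairing, since $v \in \Ker A^{(k)}$ says exactly that $v$ is orthogonal to every row of $A^{(k)}$. The direction (b)$\Rightarrow$(a) is then immediate: if some $e_i \in W$, then every $v \in W^\perp$ satisfies $v_i = \langle v, e_i\rangle = 0$, so no vector of $\Ker A^{(k)}$ can have all coordinates nonzero and $\mathbb{P}(\Ker A^{(k)})$ misses $T_N^\vee$.

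The substance is the converse (a)$\Rightarrow$(b). For each $i$ set $H_i := \{v \in W^\perp : v_i = 0\}$. I claim $H_i$ is a proper subspace of $W^\perp$ precisely when $e_i\notin W$: indeed $H_i = W^\perp$ would say $e_i$ is orthogonal to all of $W^\perp$, i.e.\ $e_i \in (W^\perp)^\perp = W$. Assuming $\A$ is $k$nap, no $e_i$ lies in $W$, so every $H_i$ is a proper subspace. A vector space over an infinite field (here $\K$ is infinite, being algebraically closed of characteristic zero) is never a union of finitely many proper subspaces; hence $W^\perp \neq \bigcup_{i=0}^N H_i$, and any $v$ in the complement has all coordinates nonzero. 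Such a $v$ lies in $\Ker A^{(k)}$ and yields a point of $\mathbb{P}(\Ker A^{(k)}) \cap T_N^\vee$, giving (b).

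I expect the only delicate point to be this last step, where one must simultaneously avoid all $N+1$ coordinate hyperplanes inside $W^\perp$. The clean way is the \emph{not a finite union of proper subspaces} fact over an infinite field; alternatively one can argue by Zariski density, noting that each locus $\{v_i\neq 0\}$ is open and dense in the irreducible variety $W^\perp$ as soon as $e_i\notin W$, so the finite intersection of these open sets is nonempty. Everything else is a direct dictionary between the matrix $A^{(k)}$, its rowspan of polynomial evaluations, and its kernel.
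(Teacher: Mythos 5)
Your proof is correct and follows essentially the same route as the paper's: the same evaluation dictionary identifying the rowspan of $A^{(k)}$ with vectors $(Q(a_0),\dots,Q(a_N))$ for $\deg Q\leq k$ gives (a)$\Leftrightarrow$(c), and your avoidance step (a vector space over an infinite field is not a finite union of proper subspaces) is just the linear-algebra phrasing of the paper's appeal to irreducibility of $\mathbb{P}(\Ker A^{(k)})$ — your Zariski-density alternative is literally the paper's argument. The only difference is one of explicitness: you spell out the biduality $(W^\perp)^\perp = W$ behind the equivalence of (a) and (b), which the paper dismisses as ``clear''.
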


\begin{proof}
The variety $\mathbb P({\Ker A^{(k)}})$  meets the torus if and only if it does not 
lie in the union of the coordinate hyperplanes. But as it is irreducible, 
this happens if and only if it does not lie in a single
coordinate hyperplane $\{ y_i=0\}$, for  some $i=0, \dots, N$. Thus condition (b) is clearly equivalent to
condition (a). To prove the equivalence with condition (c), it is enough to observe that
any vector in the $\mathbb Q$-rowspan of $A^{(k)}$ is of the form 
$(Q(a_0), \dots, Q(a_m))$ for a polynomial $Q$ of degree at most $k$.
\end{proof}

In general, let $J$ be the \emph{maximal} set of indices such that
${\mathbb P}({\Ker A^{(k)}})$ is contained in the coordinate space
$P_{N,J}:=\{x \in (\pr{N})^\vee \; |\, x_j=0 \text{ for all } j \in J\}$. Denote by
$T_{N,J}^\vee: =\{x \in P_{N,J} \; |\,, x_i \not= 0 \text{ for all } i \notin J \}$ the torus of $P_{N,J}$.
In particular, when $\A$ is $k$nap, $J=\emptyset$ and $T_{N,\emptyset}^\vee=T_N^\vee$.

\begin{prop}\label{prop:action}
Let $k \in \N$ and $\A$ a lattice configuration as in~\eqref{eq:A}.  
Then the $k$th order dual variety can be written as
\begin{equation}\label{eq:orb}
X_\A^{(k)}={\rm closure}\left(
{\bigcup {\rm Orb}_{*^\vee_\A}({\bf y})}\right),
\end{equation}
where the union is taken over ${{\bf y}\in {\mathbb P}({\Ker A^{(k)}}) \cap T_{N,J}^\vee}$.
In particular, $X_\A^{(k)}$ can be rationally parameterized, and it is 
nondegenerate if and only if $\A$ is $k$nap.
\end{prop}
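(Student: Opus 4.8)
The plan is to unwind the description of $X_\A^{(k)}$ as the closure of the image of the map~\eqref{eq:mapkdual}, and then to exploit the equivariance of the $k$-th jet construction with respect to the torus action. First I would observe that the $k$-th osculating space at the distinguished point $\mathbf 1$ is, by the remark preceding Definition~\ref{def:knap}, exactly $\mathbb P(\mathrm{rowspan}\,A^{(k)})$, so a hyperplane $\mathbf y \in (\pr N)^\vee$ contains $\T^k_{X_\A,\mathbf 1}$ if and only if $\mathbf y$ annihilates the rowspan of $A^{(k)}$, i.e.\ $\mathbf y \in \mathbb P(\Ker A^{(k)})$. This identifies the fibre of~\eqref{eq:mapkdual} over $\mathbf 1$ with $\mathbb P(\Ker A^{(k)})$. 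The point $\mathbf 1$ lies in the dense orbit $\mathrm{Orb}_{*_\A}(\mathbf 1)$, which is contained in $X_{\A,\,k\text{-cst}}$, since the generic rank $d_k+1$ of the jet map is attained there.

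Next I would establish the equivariance statement that drives the whole argument: the $k$-th osculating space transforms under the torus action as $\T^k_{X_\A,\,\mathbf t *_\A \mathbf 1} = \mathbf t *_\A \T^k_{X_\A,\mathbf 1}$, because the action~\eqref{eq:action} is by a linear automorphism of $\pr N$ preserving $X_\A$, and osculating spaces are intrinsic to the embedded variety. Dualizing, a hyperplane $\mathbf y$ is tangent to order $k$ at $\mathbf t *_\A \mathbf 1$ if and only if $\mathbf t *^\vee_\A \mathbf y$ is tangent to order $k$ at $\mathbf 1$; here the minus sign in~\eqref{eq:actiondual} is precisely what makes the dual action the correct one, via the pairing between $\pr N$ and $(\pr N)^\vee$. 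Consequently the set of hyperplanes tangent to order $k$ at points of the dense orbit is $\bigcup_{\mathbf t} \mathrm{Orb}_{*^\vee_\A}(\mathbf y)$ as $\mathbf y$ ranges over $\mathbb P(\Ker A^{(k)})$, and taking closures over the dense orbit computes all of $X_\A^{(k)}$ by irreducibility of the map~\eqref{eq:mapkdual}. The restriction of $\mathbf y$ to $T_{N,J}^\vee$ in~\eqref{eq:orb} then costs nothing: since $\mathbb P(\Ker A^{(k)})$ lies in $P_{N,J}$ by definition of the maximal $J$, its intersection with $T_{N,J}^\vee$ is dense in it, and the continuous family of orbit-closures over a dense subset has the same total closure.

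For the final sentence, rational parametrizability is immediate from~\eqref{eq:orb}: fixing any $\mathbf y \in \mathbb P(\Ker A^{(k)}) \cap T_{N,J}^\vee$, the map $\mathbf t \mapsto \mathbf t *^\vee_\A \mathbf y$ is a rational parametrization of a dense subset of $X_\A^{(k)}$. For nondegeneracy, I would argue that $X_\A^{(k)}$ is contained in the coordinate hyperplane $\{y_i = 0\}$ if and only if every point of $\mathbb P(\Ker A^{(k)})$ has vanishing $i$-th coordinate after applying the dual torus action---but the action only rescales coordinates, so this holds if and only if $\mathbb P(\Ker A^{(k)}) \subseteq \{y_i = 0\}$, i.e.\ $e_i \in \mathrm{rowspan}\,A^{(k)}$. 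By Lemma~\ref{lem:knap}, the nonexistence of such an $i$ is exactly the $k$nap condition, which gives $J = \emptyset$ and hence nondegeneracy. I expect the main obstacle to be the careful justification that passing to the closure over the \emph{dense orbit} recovers the full $X_\A^{(k)}$ rather than a proper subvariety: this requires knowing that $X_{\A,\,k\text{-cst}}$ meets the dense orbit (which it does, the orbit being where generic rank is attained) together with the irreducibility of~\eqref{eq:mapkdual}, so that the image of the dense orbit is dense in $X_\A^{(k)}$.
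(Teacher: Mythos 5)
Your derivation of the orbit description~\eqref{eq:orb} itself — identifying $\T^k_{X_\A,\mathbf 1}$ with the projectivized rowspan of $A^{(k)}$, translating by equivariance, and the density argument that restricting $\mathbf y$ to $T_{N,J}^\vee$ costs nothing — follows the same route as the paper and is sound. The genuine problems are in the two ``in particular'' claims.

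The rational parametrization argument is wrong. You claim that fixing a \emph{single} $\mathbf y \in {\mathbb P}(\Ker A^{(k)}) \cap T_{N,J}^\vee$, the map $\mathbf t \mapsto \mathbf t *^\vee_\A \mathbf y$ parametrizes a dense subset of $X_\A^{(k)}$. But the closure of one orbit has dimension at most $n$, whereas $X_\A^{(k)}$ is the closure of a union of orbits over the $(c_k-1)$-dimensional family ${\mathbb P}(\Ker A^{(k)})$, of expected dimension $n+c_k-1$. The assertion that $X_\A^{(k)}$ equals a single orbit closure is precisely condition (c) of Theorem~\ref{th:dim}, which is \emph{equivalent to $k$-selfduality}; so your claim would make every $k$nap configuration $k$-selfdual, trivializing the paper. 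Concretely, for the twisted cubic ($\A=\{0,1,2,3\}$, $k=1$) one has $c_1=2$: the dual variety is a surface in $(\pr{3})^\vee$, and no single one-dimensional orbit is dense in it. The correct argument, as in the paper, parametrizes the \emph{whole union}: choose a $\Z$-basis $\nu^1,\dots,\nu^{c_k}$ of $\Ker A^{(k)}$, parametrize ${\mathbb P}(\Ker A^{(k)})$ linearly by $\lambda \mapsto (\langle b_0,\lambda\rangle : \dots : \langle b_N,\lambda\rangle)$ as in~\eqref{eq:varphi}, and send $(\lambda,\mathbf t)$ to $(\langle b_0,\lambda\rangle\,\mathbf t^{-a_0}, \dots, \langle b_N,\lambda\rangle\,\mathbf t^{-a_N})$; it is the linearity of $\Ker A^{(k)}$, not the orbit structure, that makes this rational.

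The nondegeneracy argument also has a gap in the $k$nap direction. You correctly show (and this handles the converse direction) that $X_\A^{(k)}$ lies in a \emph{coordinate} hyperplane $\{y_i=0\}$ exactly when $e_i$ lies in the rowspan of $A^{(k)}$, so $k$nap gives $J=\emptyset$; but you then conclude ``hence nondegeneracy,'' which conflates avoiding coordinate hyperplanes with avoiding \emph{all} hyperplanes. To close this you need that the orbit of a point with all nonzero coordinates, under an action with pairwise distinct weights $-a_0,\dots,-a_N$, cannot lie in any hyperplane — this follows from the linear independence of distinct characters on the torus, and is exactly what the paper invokes via Lemma~2.14 of \cite{BDR11} together with the standing assumption that the $a_i$ are distinct. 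Without that step the proof of nondegeneracy is incomplete.
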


\begin{proof}
As we pointed out, the projectivization  of the rowspan of $A^{(k)}$ equals the $k$-th osculating space $
\T^k_{X_\A,{\mathbf 1}}$.
The osculating spaces at the points in 
the torus of $X_\A$ are translated
by the action~\eqref{eq:action}. 
We deduce from~\eqref{eq:mapkdual} (cf. also Remark 2.20 in \cite{DFS07})  
that the $k$-th dual variety in $(\mathbb P^N)^\vee$ 
can be described as in~\eqref{eq:orb}. 

Since $\Ker A^{(k)}$ is a linear subspace, it can be (linearly) parameterized,
and hence the union of orbits in~\eqref{eq:orb} admits the following rational parameterization.
Let $\nu^{1}, \dots, \nu^{c_k}$ be a $\Z$-basis of the kernel of $A^{(k)}$  and consider
the $c_k$-dimensional ``row vectors'' $b_i:=(\nu^{1}_i, \dots, \nu^{c_k}_i), i=0, \dots, N$. 
Then, the map $\varphi: (\pr{c_k-1})^\vee \to {\mathbb P}({\Ker A^{(k)}})$ defined by
\begin{equation}\label{eq:varphi}
\varphi (\lambda) =(\langle b_0, \lambda \rangle, \dots, \langle b_N, \lambda \rangle) 
\end{equation}
is a parameterization
of ${\mathbb P}({\Ker A^{(k)}})$ (here  $\langle b_i, \lambda \rangle$ 
denotes the sum $\sum_{j=1}^{c_k} b_{ij} \lambda_j$).
It follows that  $X_\A^{(k)}$ can be rationally parame\-teri\-zed by 
sending $(\lambda,{\bf t})$, with ${\bf t}$ in the $n$-torus $(\K^*)^n$ 
of points in $\K^n$ with all nonzero coordinates, to:
\begin{equation}\label{eq:par}
(\langle b_0, \lambda \rangle \, {\bf t}^{-a_0}, \dots, \langle b_N, \lambda \rangle \,  {\bf t}^{-a_N}).
\end{equation}

By equality~(\ref{eq:orb}), $X_\A^{(k)} \subset P_{N,J}^\vee$. 
Moreover, by Lemma~\ref{lem:knap},  $J$ is empty if and only if
$\A$ is $k$nap. So, when $\A$ is not $k$nap it is clear that $X_\A^{(k)}$ 
is degenerate, and when $\A$ is $k$nap
we have 
that  $X_\A^{(k)}$ is nondegenerate because we are assuming 
that all weights $a_i$ are different and so no orbit can lie in
a linear space by by Lemma~2.14 in \cite{BDR11}.
\end{proof}

\subsection{First characterization of higher selfduality for toric embeddings}

The following result is a generalization of Theorems~3.2 and~3.3 in \cite{BDR11}.  
We will avoid the subscripts to indicate the torus action, when this is clear from the context.

\begin{thm}\label{th:dim}
 Let $\A$ be a lattice configuration as in \eqref{eq:A}.
Then, the following statements are equivalent:
\begin{enumerate}
 \item  \label{it4} $X_\A$ is $k$-selfdual.
 \item \label{it5} $\dim X_\A= \dim X_\A^{(k)}$ and $\A$ is $k$nap.
\item \label{it6} There exists a point $\bf p$  in the torus of $(\pr{N})^\vee$ such that
$X_\A^{(k)} = {\rm closure}({\rm Orb}_{*^\vee_\A}({\bf p}))$.
\end{enumerate}
\end{thm}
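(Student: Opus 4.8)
The plan is to prove the cyclic chain of implications $(\ref{it4}) \Rightarrow (\ref{it5}) \Rightarrow (\ref{it6}) \Rightarrow (\ref{it4})$, leaning heavily on the orbit description of $X_\A^{(k)}$ from Proposition~\ref{prop:action} and on the characterization of $k$nap from Lemma~\ref{lem:knap}. The guiding principle throughout is that both $X_\A$ and $X_\A^{(k)}$ carry compatible torus actions (the action $*_\A$ and its dual $*^\vee_\A$), and that the linear isomorphism $\varphi$ witnessing $k$-selfduality should be forced to intertwine these two actions. This equivariance is what converts the a priori analytic statement ``$\varphi(X_\A)=X_\A^{(k)}$'' into the rigid combinatorial condition that $\A$ be $k$nap.

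For $(\ref{it4}) \Rightarrow (\ref{it5})$, I would argue as follows. If $X_\A$ is $k$-selfdual via a linear isomorphism $\varphi$, then $\dim X_\A = \dim X_\A^{(k)} = n$ immediately, since a linear isomorphism preserves dimension. The content lies in deducing that $\A$ is $k$nap. Here the key observation is that $X_\A = \mathrm{closure}(\mathrm{Orb}_{*_\A}(\mathbf 1))$ is the closure of a single torus orbit, so it is nondegenerate (the $a_i$ are distinct, by Lemma~2.14 of \cite{BDR11}). Since $\varphi$ is a linear isomorphism, $\varphi(X_\A) = X_\A^{(k)}$ is also nondegenerate. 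But Proposition~\ref{prop:action} says precisely that $X_\A^{(k)}$ is nondegenerate \emph{if and only if} $\A$ is $k$nap. Hence $\A$ is $k$nap.

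For $(\ref{it5}) \Rightarrow (\ref{it6})$, I would use the hypothesis that $\A$ is $k$nap together with $\dim X_\A^{(k)} = n$. When $\A$ is $k$nap, Proposition~\ref{prop:action} gives $J = \emptyset$, so $X_\A^{(k)} = \mathrm{closure}\left(\bigcup_{\mathbf y} \mathrm{Orb}_{*^\vee_\A}(\mathbf y)\right)$ with the union over $\mathbf y \in \mathbb P(\Ker A^{(k)}) \cap T_N^\vee$. The variety $X_\A^{(k)}$ is irreducible (from~\eqref{eq:mapkdual}) of dimension $n$, which equals the dimension of any single orbit closure $\mathrm{closure}(\mathrm{Orb}_{*^\vee_\A}(\mathbf p))$ for $\mathbf p$ in the torus (since the weights $a_i$ are distinct, such an orbit has full dimension $n$). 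A generic point $\mathbf p \in \mathbb P(\Ker A^{(k)}) \cap T_N^\vee$ has $\dim \mathrm{closure}(\mathrm{Orb}_{*^\vee_\A}(\mathbf p)) = n = \dim X_\A^{(k)}$; since both are irreducible of the same dimension and one is contained in the other, the single orbit closure must fill out $X_\A^{(k)}$, giving~(\ref{it6}).

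The implication $(\ref{it6}) \Rightarrow (\ref{it4})$ is where I expect the real work to lie, and it is the heart of the theorem. The plan is to produce the linear isomorphism $\varphi$ explicitly from the point $\mathbf p = (p_0 : \cdots : p_N)$ with all $p_i \neq 0$. Comparing the parametrization $t \mapsto (t^{a_0} : \cdots : t^{a_N})$ of $X_\A$ under $*_\A$ with the parametrization $t \mapsto (p_0\, t^{-a_0} : \cdots : p_N\, t^{-a_N})$ of $\mathrm{closure}(\mathrm{Orb}_{*^\vee_\A}(\mathbf p))$ under $*^\vee_\A$, one sees that the diagonal linear map $\varphi$ sending the $i$-th coordinate $x_i \mapsto p_i\, x_i$, followed by the identification $\mathbb P^N \cong (\mathbb P^N)^\vee$, carries $X_\A$ onto $\mathrm{closure}(\mathrm{Orb}_{*^\vee_\A}(\mathbf p)) = X_\A^{(k)}$. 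Because all $p_i$ are nonzero, this diagonal map is a genuine linear isomorphism. The main obstacle, and the step requiring care, is verifying that the reparametrization $t \mapsto t^{-1}$ (coordinatewise inversion, which is an automorphism of the torus) correctly matches the two orbit parametrizations on the nose, so that $\varphi$ maps the dense orbit of $X_\A$ isomorphically to the dense orbit of $X_\A^{(k)}$; passing to closures then gives $\varphi(X_\A) = X_\A^{(k)}$. Once the isomorphism is in hand, the equality $\varphi(X_\A) = X_\A^{(k)}$ is exactly Definition~\ref{def:kself}, so $X_\A$ is $k$-selfdual.
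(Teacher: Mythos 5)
Your proposal is correct and takes essentially the same route as the paper's proof: the same cycle (a)$\Rightarrow$(b)$\Rightarrow$(c)$\Rightarrow$(a), with the nondegeneracy criterion of Proposition~\ref{prop:action} for the first implication, the existence of a point $\mathbf{p}\in\mathbb{P}(\Ker A^{(k)})\cap T_N^\vee$ (Lemma~\ref{lem:knap}) plus the dimension-and-irreducibility comparison of orbit closures for the second, and the diagonal linear map given by coordinatewise multiplication by $\mathbf{p}$ (with the torus inversion absorbed into a reparametrization of the parameter torus, exactly as you note) for the third. No changes needed.
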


\begin{proof}
Assume  (a) holds. Then the equality of the dimensions in  (b) is evident.
In case $\A$ is not $k$nap, $X_\A^{(k)}$ is degenerate by Proposition~\ref{prop:action}. 
As $X_\A$ is nondegenerate, there is no linear isomorphism $\varphi$ such that $\varphi(X_\A)=X_\A^{(k)}$.

Assume now that  (b) holds. By Lemma~\ref{lem:knap} there exists a point 
${\bf p} \in {\mathbb P}({\Ker A^{(k)}}) \cap T_N^\vee$.
Then, $X_\A = {\rm closure} ({\rm Orb_{*_\A}}({\bf 1}))$ is isomorphic to 
${\rm closure}({\rm Orb_{*\vee_\A}}({\bf p}))$
 by the diagonal linear isomorphism given by coordinatewise
product by ${\bf p}$. Moreover, we deduce from \eqref{eq:orb} that
$${\rm Orb_{*\vee_\A}}({\bf p}) \subset X_\A^{(k)}=
{\rm closure}\left({\bigcup{\rm Orb_{*\vee_\A}}({\bf y})}\right),$$
where ${{\bf y}\in
 {\mathbb P}({\Ker A^{(k)}})\cap T_N^\vee}$.
As the dimensions agree and both varieties are irreducible, it follows that  (c) holds  

Finally, if (c) holds, then
the closure of the orbit of ${\bf p}$ under $*^\vee_\A$ has the same dimension as
$X_\A = {\rm Orb}_{*_\A}({\bf 1})$ and they are  isomorphic, which implies that $X_\A$ is $k$-selfdual.
\end{proof}

\begin{example}
To illustrate the fact that $k$nap-ness is needed in (b), here is an example with 
$\dim X_\A=\dim X_\A^{(k)}=2$, $\A$ not $k$nap, and $X_\A$ is not $k$-selfdual, for $k=1$. 
Consider the lattice point configuration $\A$ giving the following matrix:

\[A=\left(\begin{array}{ccccc}
1&1&1&1&1\\
1&0&0&0&0\\
0&0&1&2&3
\end{array}\right)
\]
This matrix is obviously a pyramid (so it is not $1$nap). In fact, $X_\A\subset  \pr{4}$ 
is the cone over a twisted cubic curve in a $ \pr{3}\subset  \pr{4}$ 
with vertex a point outside the $ \pr{3}$. The hyperplanes tangent to $X_\A$ 
are the hyperplanes containing this vertex and a tangent line to the twisted cubic. 
Thus the dual variety $X_\A^\vee$ is a surface contained in a hyperplane, 
hence degenerate, and therefore cannot be linearly equivalent to $X_\A$. 
Observe that $X_\A^\vee$ is not a toric variety. In fact, if we identify the hyperplane
$P_{0}$ with $(\pr{3})^\vee$,  $X_\A^\vee$ gets identified with the dual of the twisted cubic. 
We generalize this observation in Lemma~\ref{lem:DJ}.
\end{example}

\section{Combinatorial characterization of {$k$}-selfdual toric varieties}\label{sec:character}

In this section we characterize $k$-selfdual toric varieties $X_\A$ in combinatorial terms. 
Our main result is Theorem~\ref{thm:eL}.
In Proposition~\ref{prop:rCayley} we show  that when the kernel of
the associated matrix $A^{(k)}$ has dimension $c_k=1$, $k$-selfduality 
is automatic provided $\A$ is $k$nap, and we give some consequences of $k$-selfduality.
We start by recalling the definition of Cayley configurations.

\begin{definition}\label{def:Cayley}
A configuration $\A \subset \Z^{r+d}$ is said to be $r$-Cayley if there exist $r$ lattice
configurations $\A_1, \dots, \A_r \subset \Z^d$ such that $\A$ is affinely isomorphic to
$$\Cayley (\A_1,\dots,\A_r) = e_1 \times \A_1 \cup \dots \cup e_r \times \A_r,$$ 
where $\{e_1, \dots, e_r\}$ denotes the
canonical basis in $\Z^r$. 
\end{definition}

Note that we do not require the $\A_i$ to be non degenerate, i.e., 
possibly $\A_i\subset \mathbb Z^e\subset \mathbb Z^d$, with $e< d$. 
When all configurations $\A_i$ equal a given lattice configuration $\B$,
 then $X_{\Cayley (\A_1,\dots,\A_r)}$ is isomorphic to
the product $\pr{r-1} \times X_\B$.

\begin{remark}\label{rem:1Cayley}
Any $r$-Cayley configuration $\A$  lies in an affine hyperplane off the origin since
$e_1 \times \A_1 \cup \dots \cup e_r \times \A_r$ lies in the hyperplane defined by the sum of the first
$r$ coordinates equal to $1$. Modulo an affine isomorphism, we can assume that $\A$ lies
in the hyperplane defined by $x_1=1$. As we remarked before, 
without any loss of generality, any lattice configuration  is
of the form~\eqref{eq:A}, that is,  any $\A$ is  $1$-Cayley. 
Starting with $r=2$, the condition of being $r$-Cayley imposes serious combinatorial constraints;
in particular, all points in $\A$ need to lie in two parallel hyperplanes. Note also that any 
$r$-Cayley configuration is also an $r'$-Cayley configuration for any $r'\le r$. 
\end{remark}

Recall that given a lattice configuration $\A$ as in \eqref{eq:A} and $k \in \N$, we denote the dimension of
$\Ker A^{(k)}$ by $c_k$~\eqref{eq:ck}.
Given a $\Z$-basis of this kernel $\nu^{1}, \dots, \nu^{c_k}$, we denote by 
 $b_i:=(\nu^{1}_i, \dots, \nu^{c_k}_i), i=0, \dots, N$ the $c_k$-dimensional ``row vectors''.

\begin{definition}\label{def:eL}
Given a line $L$ through the origin in $\R^{c_k}$, we define the 
$(0,1)$-vector $e_L \in \R^{N+1}$ by
the condition that $(e_L)_i = 1$ if and only if $b_i \in L$.
\end{definition}

The vectors $e_L$ are independent of the choice of basis of $\Ker A^{(k)}$.

\begin{thm}\label{thm:eL}
The projective toric variety $X_\A$ is $k$-selfdual if and only if 
$\A$ is $k$nap and the vectors $e_L$ lie in the rowspan of $A$ for
each line $L$ through the origin in $\R^{c_k}$.

\smallskip
More explicitly, let $L_1, \dots, L_r$  be the lines containing some of the vectors
$b_i$, and for any $j=1, \dots,r$, set $\Gamma_j :=\{ i : b_i \in L_j \}\subseteq \{0, \dots, N\}$. 
Then, $r \ge c_k$ and  $X_\A$ is $k$-selfdual if and only if 
$\A$ is $k$nap and $\A$ is $r$-Cayley with respect to this partition of $\{0, \dots, N\}$.

\smallskip
In particular, 
\begin{equation}\label{eq:bi}
\sum_{\ell \in \Gamma_j} b_\ell =0 \quad \text{ for any } \, j=1, \dots, r.
 \end{equation}

\end{thm}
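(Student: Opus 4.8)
The plan is to leverage Theorem~\ref{th:dim} as the backbone: $X_\A$ is $k$-selfdual if and only if $\A$ is $k$nap and there is a point $\mathbf p$ in the torus of $(\pr N)^\vee$ whose orbit closure under $*^\vee_\A$ equals $X_\A^{(k)}$. By Proposition~\ref{prop:action}, $X_\A^{(k)}$ is the closure of the union of orbits of points $\mathbf y\in\mathbb P(\Ker A^{(k)})\cap T_N^\vee$, parametrized rationally by~\eqref{eq:par}. So the whole question reduces to understanding when this union of orbits collapses to a \emph{single} orbit closure. The first thing I would do is translate the parametrization~\eqref{eq:par} into the language of the vectors $b_i$: the point $\varphi(\lambda)$ has $i$-th coordinate $\langle b_i,\lambda\rangle$, and acting by $\mathbf t$ multiplies the $i$-th coordinate by $\mathbf t^{-a_i}$. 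The key observation I expect to need is that two points $\varphi(\lambda)$ and $\varphi(\lambda')$ lie in the same $*^\vee_\A$-orbit precisely when, coordinatewise, the ratios $\langle b_i,\lambda\rangle/\langle b_i,\lambda'\rangle$ are given by a single character $\mathbf t^{-a_i}$ of the torus.

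\textbf{Reduction to lines.} \emph{Next I would} argue that whether $\varphi(\lambda)$ lies in a fixed orbit closure depends only on which line $L\subset\R^{c_k}$ the parameter $\lambda$ determines, via the pattern of vanishing and proportionality of the linear forms $\langle b_i,\cdot\rangle$. Concretely, grouping the indices $i$ by the lines $L_j=\Span(b_i)$ should reveal that the torus action on the $\mathbf y$-coordinates respects this partition: within each group $\Gamma_j$, the vectors $b_i$ are scalar multiples of one another, so the forms $\langle b_i,\lambda\rangle$ are proportional and their pairwise ratios are \emph{constants} independent of $\lambda$. This is exactly the structure that forces $\A$ to be $r$-Cayley with respect to the partition $\{\Gamma_j\}$: the proportionality of coordinates within each block means $X_\A^{(k)}$ is (up to the torus action) a join/Cayley-type configuration, and I would show that collapsing the union of orbits to one orbit closure is equivalent to the existence of a diagonal torus element compensating across blocks, which is possible exactly when the relevant $(0,1)$-indicator vectors $e_{L_j}$ lie in the rowspan of $A$.

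\textbf{Connecting the two formulations.} \emph{Then I would} tie the ``$e_L$ in the rowspan of $A$'' condition to the $r$-Cayley condition. By Lemma~\ref{lem:knap}(c), a vector of the form $(Q(a_0),\dots,Q(a_N))$ with $\deg Q\le k$ lies in the rowspan of $A^{(k)}$, and membership of $e_L$ in the rowspan of $A$ (degree $\le 1$) is the statement that the indicator of a block $\Gamma_j$ is cut out by an affine-linear functional on $\A$. That affine-linear functional taking value $1$ on $\Gamma_j$ and $0$ elsewhere is precisely what places the points $\{a_i:i\in\Gamma_j\}$ in a common affine hyperplane distinct for each $j$, which is the defining feature of a Cayley configuration as in Definition~\ref{def:Cayley} and Remark~\ref{rem:1Cayley}. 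The inequality $r\ge c_k$ I would get by noting that the $b_i$ span $\R^{c_k}$ (since $\nu^1,\dots,\nu^{c_k}$ are a basis of $\Ker A^{(k)}$), so the lines $L_1,\dots,L_r$ must span $\R^{c_k}$ and hence $r\ge c_k$.

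\textbf{The relation~\eqref{eq:bi}.} \emph{Finally}, I would deduce~\eqref{eq:bi} as a consequence. Once $e_{L_j}$ lies in the rowspan of $A$, pairing it with $\Ker A^{(k)}\supseteq$ (the span of the columns giving the $b_i$) should force $\sum_{\ell\in\Gamma_j}b_\ell=0$: concretely, $e_{L_j}$ in the rowspan of $A$ means $e_{L_j}$ is orthogonal to $\Ker A$, and since each $\nu^m$ lies in $\Ker A^{(k)}\subseteq\Ker A$, we get $\sum_{\ell\in\Gamma_j}\nu^m_\ell=\langle e_{L_j},\nu^m\rangle=0$ for every $m$, which is exactly the coordinatewise statement $\sum_{\ell\in\Gamma_j}b_\ell=0$. \textbf{The hard part will be} the reduction-to-lines step: making rigorous that the union of torus orbits over all of $\mathbb P(\Ker A^{(k)})\cap T_N^\vee$ closes up to a \emph{single} orbit closure exactly under the Cayley/rowspan condition, rather than merely containing one. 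This requires controlling the dimension count and showing the torus acts transitively modulo the block-diagonal scaling, which is where the interplay between $c_k$, the number of lines $r$, and $\dim X_\A^{(k)}$ must be pinned down carefully.
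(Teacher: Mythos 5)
Your skeleton matches the paper's: Theorem~\ref{th:dim} plus Proposition~\ref{prop:action} reduce everything to deciding when the union of orbits over ${\mathbb P}(\Ker A^{(k)})\cap T_N^\vee$ closes up to a single orbit closure, and your peripheral deductions are correct --- the derivation of~\eqref{eq:bi} from orthogonality of the rowspan of $A$ to $\Ker A\supseteq \Ker A^{(k)}$, the bound $r\ge c_k$ from the fact that the $b_i$ span $\R^{c_k}$, and the dictionary between ``$e_{L_j}$ in the rowspan of $A$'' and the $r$-Cayley structure. But the core of the theorem, namely both implications of the equivalence, is exactly the step you defer as ``the hard part,'' and nothing in your outline supplies a mechanism for it. In particular, you never explain why the rowspan of $A$ (equivalently, orthogonality to \emph{all} of $\Ker A$, not merely to $\Ker A^{(k)}$) is the relevant condition, nor how one certifies that a union of infinitely many orbits equals a single orbit closure.

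The tool your proposal is missing is the toric-ideal characterization of orbit closures~\cite{BSbook}, which is the engine of the paper's proof: $X$ equals ${\rm closure}({\rm Orb}_{*^\vee_\A}({\mb p}))$ if and only if $X$ is cut out by the binomials ${\mb p}^{v^-}y^{v^+}-{\mb p}^{v^+}y^{v^-}$ with $v\in\Ker A$ --- this is precisely where $\Ker A$ enters. Substituting the parametrization $y_i=\langle b_i,\lambda\rangle\,{\mb t}^{-a_i}$ of~\eqref{eq:par} cancels the ${\mb t}$-variables (exactly because $v\in\Ker A$) and leaves a polynomial identity in $\lambda$; since the linear forms $\langle b_i,\lambda\rangle$ are irreducible, and two of them are associated exactly when $b_i$ and $b_j$ are collinear, unique factorization forces $\sum_{i\in\Gamma_j}v_i=0$ for every $v\in\Ker A$ and every line $L_j$, which is the forward implication. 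Conversely, when these sums vanish, the product $\prod_i\langle b_i,\lambda\rangle^{v_i}$ is a nonzero constant $\mu^*$ independent of $\lambda$, and evaluating at $\lambda_0$ with ${\mb p}=\varphi(\lambda_0)$ gives $\mu^*={\mb p}^v$, so every parametrized point of $X_\A^{(k)}$ satisfies the binomials of the orbit closure of ${\mb p}$. Your alternative heuristic --- producing, for each pair $\lambda,\lambda'$, a torus element carrying $\varphi(\lambda)$ to $\varphi(\lambda')$ ``modulo block scaling'' --- would need this same machinery to become rigorous (for instance, to know that ${\rm closure}({\rm Orb}_{*^\vee_\A}({\mb p}))\cap T_N^\vee$ is exactly the orbit of ${\mb p}$, and to control which coordinatewise ratios are characters), so as written the proposal has a genuine gap at its center.
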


\begin{proof}
Note that the condition that the vector $e_L$ lies in the rowspan of $A$ is equivalent to
the fact that $\sum_{b_i \in L} v_i=0$ for any vector $v$ in $\Ker A$.

We recall a basic result from the theory of toric ideals \cite{BSbook}.
Let $(y_0 : \dots : y_N)$ be homogeneous coordinates in ${\pr{N}}$ 
and write any vector $v\in \Z^{N+1}$ as the difference
of two non negative integer vectors with disjoint support $v = v^+ - v^-$. 
It is well known  that a projective variety is of the form 
${\rm closure}({\rm Orb}_{*^\vee_\A}({\bf p}))$ if and only
 if it is cut out by the following binomial equations:
\begin{equation}\label{eq:toric}
{\bf p}^{v^-}y^{v^+} - {\bf p}^{v^+}y^{v^-}=0, \text{ for all }  v \in \Ker A.
\end{equation}

Assume $X_\A$ is $k$-selfdual. Then, it follows from
Theorem \ref{th:dim} that $\A$ is $k$nap and 
there exists ${\bf p} =(p_0 : \dots:p_N)\in {T}_{N}^\vee$ such that $X_\A^{(k)} = 
{\rm closure}({\rm Orb}_{*^\vee_\A}({\bf p}))$.
We substitute the rational parametrization $y_i = 
 \langle b_i, \lambda \rangle \, {\bf t}^{-a_i}, i =0, \dots, N$, 
from~\eqref{eq:par} into equations~\eqref{eq:toric}.
Then, the ${\bf t}$ variables get cancelled and for any $v \in \Ker A$ we have the
following polynomial identity in the variables $\lambda$:
\begin{equation}\label{eq:pari}
{\bf p}^{v^-} \prod_{v_i > 0}  \langle b_i, \lambda \rangle^{v_i} = 
{\bf p}^{v^+} \prod_{v_i < 0}  \langle b_i, \lambda \rangle^{-v_i}.
\end{equation}
The polynomials on both sides of~\eqref{eq:pari}  
must have the same irreducible factors to the same powers. 
Clearly, $\langle b_i, \lambda \rangle$ and $\langle b_k,\lambda \rangle$ are associated
irreducible factors if and only if $b_i$ and $b_k$ are collinear
vectors. For any line $L_j$, let $\beta_j$ be one of the two integer generators of $L_j$ and for any
$i \in \Gamma_j$ write $b_i = \mu_{ij} \beta_j, \mu_{ij} \in \Z \setminus \{0\}$.
Then, for any $v \in \Ker A$, the rational function
\[\prod_{i\in \Gamma_j}  \langle \beta_j, \lambda \rangle^{v_i} = 
\langle \beta_j, \lambda \rangle^{\sum_{i\in \Gamma_j}  v_i} \]
must be constant, which implies that 
\begin{equation}\label{eq:vi}
\sum \limits_{i \in \Gamma_j} v_{i}=0, \, \text{ for any } j=1, \dots,r.
\end{equation} 
As we remarked, this is equivalent to the fact that the
vectors $e_{L_j}$ lie in the rowspan of $A$ for any $j=1, \dots,r$. 
This means that the partition of $\A$ given 
by the subsets $\Gamma_j, j=1,\dots, r,$ gives $\A$ an $r$-Cayley structure.
As $\Ker A^{(k)} \subseteq \Ker A$, 
we get that the vectors $v= \nu^{1}, \dots, \nu^{c_k}$ also satisfy~\eqref{eq:vi},
and we deduce that the sum of the row vectors $b_i, i\in \Gamma_j$, 
satisfies~\eqref{eq:bi}, as all its coordinates are equal to zero.
Note also that since the vectors $b_i$ span $\Z^{c_k}$, 
they must lie in at least $c_k$ different lines, i.e., $r \ge c_k$.

Assume now that the vectors $e_{L_j}$ lie in the rowspan of $A$ for
each line $L_j$ through the origin in $\R^{c_k}$, or equivalently, that~\eqref{eq:vi} holds, for any
$j=1, \dots, r$ and any $v \in \Ker A$.  
As $\A$ is $k$nap, there exists a vector
${\bf p} \in {\mathbb P}({\Ker A^{(k)}}) \cap T_N^\vee$. We write it as
 ${\bf p} = \varphi(\lambda_0)$ with $\lambda_0 \in(\pr{c_k-1})^\vee$ 
and $\varphi$ the linear map defined in~\eqref{eq:varphi}.
Then, it is straightforward to verify that for any $\lambda$ with 
$\varphi(\lambda) \in {\mathbb P}({\Ker A^{(k)}}) \cap T_N^\vee$, 
\begin{equation}\label{eq:mu}
\prod_{i}  \langle b_i, \lambda \rangle^{v_i} = \mu^* 
 \left(\prod_{j=1}^r \langle \beta_j, \lambda \rangle^{\sum_{i \in \Gamma_j} v_i}\right)
= \mu^*,
\end{equation}
where the nonzero constant $\mu^*$ is equal to the product 
$\left(\prod_{j=1}^r \prod_{i \in \Gamma_j} \mu_{ij}^{v_i}\right)$.
But in particular,~\eqref{eq:mu} holds for $\lambda_0$, that is, $\mu^*={\bf p}^v$. 
Therefore, the binomial equations in ~\eqref{eq:pari} hold, as wanted.
\end{proof}

Given a lattice configuration $\A$,  we have seen in Section~\ref{sec:higher} that
the projectivization  of the rowspan of $A^{(k)}$ equals the $k$-th osculating space $
\T^k_{X_\A,{\mathbf 1}}$. Recall that the embedded toric variety
$X_\A$ is generically $k$-jet spanned  when the rank $d_k+1$ 
of $A^{(k)}$ equals $\binom{n+k}k$, i.e., if $d_k=\binom{n+k}k-1$.
Also,  $\A$ is $k$nap if and only if there is an element of the kernel of $A^{(k)}$ which lies in the torus.
We easily deduce the following restrictions.

\begin{prop} \label{prop:rCayley}
Let $\A$ be a lattice configuration as in~\eqref{eq:A}  and $k \in \N$.
Then:
 \begin{itemize}
\item[(i)] If $\A$ is $k$nap and $c_k=1$, then $X_\A$ is $k$-selfdual and $|\A|\leq \binom{n+k}k +1$.
\item[(ii)]  If  $X_\A$ is $k$-selfdual, then $\A$ is $c_k$-Cayley. 
\item[(iii)]  If  $X_\A$ is $k$-selfdual for $k\geq 2$ and $c_k\ge 2$, 
then $X_\A$ is  not generically $k$-jet spanned.
\end{itemize}
\end{prop}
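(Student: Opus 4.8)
The plan is to dispatch the three items separately, in each case reducing to the combinatorial criterion of Theorem~\ref{thm:eL} together with the rank bookkeeping \eqref{eq:ck}. For (i), the key observation is that when $c_k=1$ the space $\R^{c_k}=\R$ contains a unique line through the origin, namely $L=\R$ itself; by Definition~\ref{def:eL} every scalar $b_i$ lies on it (the origin included), so the only vector $e_L$ is $(1,\dots,1)=\mb v_0$, the first row of $A$, which certainly lies in the rowspan of $A$. Since $\A$ is assumed $k$nap, Theorem~\ref{thm:eL} at once yields that $X_\A$ is $k$-selfdual. The cardinality bound is then pure bookkeeping: by \eqref{eq:ck} we have $|\A|=N+1=c_k+(d_k+1)$, and since $d_k+1=\operatorname{rank}A^{(k)}$ while $A^{(k)}$ has $\binom{n+k}{k}$ rows, we get $d_k+1\le\binom{n+k}{k}$; with $c_k=1$ this gives $|\A|\le\binom{n+k}{k}+1$. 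For (ii) I would simply invoke the explicit form of Theorem~\ref{thm:eL}: $k$-selfduality forces $\A$ to be $r$-Cayley, where $r\ge c_k$ is the number of lines through the origin containing the $b_i$, and Remark~\ref{rem:1Cayley} says an $r$-Cayley configuration is $r'$-Cayley for every $r'\le r$, so in particular $\A$ is $c_k$-Cayley.

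For (iii), the idea is to exhibit an explicit polynomial of degree at most $k$ vanishing on all of $\A$, which forces a linear dependence among the rows of $A^{(k)}$. By (ii), $k$-selfduality gives a Cayley decomposition $\A\cong\Cayley(\A_1,\dots,\A_{c_k})$, and since $c_k\ge2$ it has at least two (nonempty) parts. The first Cayley coordinate pulls back to an affine function $\ell$ on $\Z^n$ that is $1$ on the points of $e_1\times\A_1$ and $0$ on the points of the remaining parts; thus $\ell(a_i)\in\{0,1\}$ for all $i$, and $\ell$ is non-constant on $\A$ because $c_k\ge2$. Consequently $P:=\ell^2-\ell$ is a \emph{nonzero} polynomial of degree exactly $2$ that vanishes at every point of $\A$. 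Because $k\ge2$, $P$ has degree at most $k$, so its coefficient vector in the monomial basis is a nonzero left-null vector of $A^{(k)}$, i.e.\ a nontrivial relation among the $\binom{n+k}{k}$ rows $\mb v_\alpha$, $|\alpha|\le k$. Hence $\operatorname{rank}A^{(k)}<\binom{n+k}{k}$, which is exactly the failure of generic $k$-jet spannedness.

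The content of the proposition lies entirely in (iii); parts (i) and (ii) are formal consequences of the results already in hand. The main obstacle in (iii) is to locate a genuine degree-$\le k$ relation, and the decisive point is that a $c_k$-Cayley structure with $c_k\ge2$ supplies a non-constant $\{0,1\}$-valued affine coordinate whose associated quadric $\ell^2-\ell$ does the job. I would take care over exactly two checks, since both pin down where the hypotheses are used: that $\ell$ is non-constant on $\A$ (this needs a second part, hence $c_k\ge2$), and that $\ell^2-\ell$ is still a nonzero degree-$2$ polynomial in the original coordinates (true because $\ell$ is a non-constant affine form). I expect the instructive subtlety to be that both $k\ge2$ and $c_k\ge2$ are genuinely required: for $k=1$ no such quadric is admissible, matching the fact that $1$-Cayley configurations can be generically $1$-jet spanned. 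As an alternative to the single relation $\ell^2-\ell$, one could instead count directly: on $\A$ the relations $u_su_t=0$ and $u_s^2=u_s$ collapse the degree-$\le k$ monomials to a spanning set of size $\binom{d+k}{k}+(c_k-1)\binom{d+k-1}{k-1}$, which is strictly smaller than $\binom{n+k}{k}$ with $n=(c_k-1)+d$ precisely when $c_k\ge2$ and $k\ge2$.
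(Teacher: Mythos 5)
Your proposal is correct and takes essentially the same approach as the paper: items (i) and (ii) are read off from Theorem~\ref{thm:eL} exactly as there, including the same observation that $c_k=1$ forces $e_L=(1,\dots,1)$ and the same column-count bound. Your quadric $\ell^2-\ell=-\ell(1-\ell)$ in (iii) is also the paper's mechanism in thin disguise: the paper multiplies the disjointly supported $(0,1)$-vectors $e_{L_1},e_{L_2}$ from the rowspan of $A$, while you multiply the affine indicator $\ell$ of the first Cayley part with its complement $1-\ell$; either way one gets a nonzero polynomial of degree $2\le k$ vanishing on all of $\A$, hence $\rk A^{(k)}<\binom{n+k}{k}$.
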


\begin{proof}
To prove (i), assume $\A$ is $k$nap and
$c_k=1$. Then it follows from Theorem~\ref{thm:eL} that $X_\A$ is $k$-selfdual. 
In fact, since $\mathbb R^{c_k}=\mathbb R$, 
there is only one line $L=\mathbb R$, and $e_L=(1,\dots ,1)$ is in the rowspan of $A$.  
The inequality follows from the fact 
that $A^{(k)}$ has $|\A|$ columns and $\binom{n+k}k$ rows.

Item (ii) is proved in Theorem~\ref{thm:eL} .

In case $c_k \ge 2$,  there exist at least two (nonzero) $(0,1)$-vectors
with \emph{disjoint} support lying in the row span of $\A$, 
for instance the vectors $e_{L_1}$, $e_{L_2}$. As their
coordinatewise product is the zero vector, we see that $\rk A^{(k)}$ 
cannot be maximal  for any $k \ge 2$. This proves (iii).
\end{proof}

Assume $X_\A\subset \mathbb P^N$ is $k$-selfdual and of dimension $n$, 
then by Proposition \ref{prop:rCayley}, $X_\A$ is $c_k$-Cayley. 
Clearly $c_k\le n$.  Since $c_k=N-d_k$, it follows that $d_k\ge N-n$. 
This gives a lower bound for the dimension $d_k$ of the $k$th osculating space at a general point of $X_\A$.


\section{Higher selfdual surfaces}\label{sec:surfaces}

In the case of surfaces, Corollary~\ref{prop:rCayley} implies that for any
$k\ge 2$,  if the surface $X_\A$ is $k$-selfdual, either $c_k = 1$ 
or $X_\A$ is a scroll (all points lie on two parallel lines,
which are at lattice distance $1$ if $\A$ spans $\Z^2$).  

In this section we give several examples which are not scrolls (so necessarily $c_k=1$), 
which are partly inspired by the BA Thesis of Mulliken~\cite{M95}.
Examples of $k$-selfdual toric varieties can be extracted 
from studies of failure of the condition of $k$-jet spannedness as in~\cite{Pe00}.

It is clear by the examples we present that there is no hope in classifying 
smooth $k$-selfdual toric varieties $X_\A$, even with $c_k=1$. 
Instead,  it could be feasible to characterize $k$-selfdual toric varieties 
when  $X_\A$ is smooth and $\A$ is complete (i.e., it consists 
of all lattice points in its convex hull). 
This is related to the classification of minimal smooth Togliatti systems in~\cite{MM16},
restricted to the complete case.

To understand the difficulty in finding a complete classification,  
we refer also to~\cite{Betal15} and~\cite{L10}. In~\cite{Betal15}, the authors 
prove that once the dimension  $n$ of the variety and $N$ of the ambient space are fixed, 
there are only finitely many smooth, toric varieties corresponding to complete configurations. A
 full list of all possible configurations $\A$ is given in case of surfaces and threefolds, with $N$ at most eleven.

\subsection{Togliatti surface and generalizations}\label{ssec:Togliatti}

A classical example, going back to Togliatti \cite{Tog29}, of a smooth surface such that almost all second osculating spaces 
have dimension  $4$, is the one given by the configuration
\[\A=\{(0,0),(1,0),(0,1),(2,1),(1,2),(2,2)\}.\]
As is also observed in \cite[Cor. 4.4, p.~361]{LM}, this surface is $2$-selfdual. There is a unique conic through 
the six points in $\A$, given by the vanishing of $q_1(x,y)=x^2-xy+y^2-x-y$, 
and the conic $\{q_1=0\}$ does not pass through any other lattice points.

Note that the interior lattice point of the hexagon is omitted. This corresponds to the fact that the surface is the 
(toric) projection of a projectively normal surface in $\mathbb P^6$, namely the Del Pezzo surface of degree $6$. 
The center of projection is a point that contains all hyperplanes that are 2-osculating. It follows that the lattice point 
configuration of the Del Pezzo surface is not $2$nap, so this surface is not $2$-selfdual in our sense, since its 
second dual variety is contained in a hyperplane. Because the projection map is an isomorphism and the Togliatti 
surface is $2$-selfdual, the Del Pezzo surface is considered to be $2$-selfdual in \cite[Thm. 3.4.1, p.~357]{LM}.

\begin{remark}
As a complement to this example, let us recall that the study of varieties with ``too small'' osculating spaces goes back to
 C. Segre \cite{Seg1907} for surfaces,
 to Sisam \cite{Sis1911} for threefolds, and for varieties of any dimension to Terracini \cite{Ter1912}, and subsequently Togliatti. 
It was Terracini who coined the expression ``satisfying [a certain number of] Laplace equations.'' 
Recent work on generalizations of Togliatti's examples includes \cite{F-I02}, \cite{Ila06}, and \cite{V06}. 
The study of these varieties have further been linked to an algebraic notion called the ``weak Lefschetz'' property \cite{MMO13}. 
A different kind of varieties with ``too small'' osculating spaces were discovered and studied by Dye \cite{Dye92}. 
In  \cite{Lv16} Lvovski considers $2$-selfdual space curves and, more generally, 2-selfduality for 
$(n-1)$-dimensional varieties $X\subset \mathbb P^{2n-1}$ satisfying $c_2=1$. He is particularly interested 
in the case when $X$ is Legendrian with respect to a contact structure on  $ \mathbb P^{2n-1}$.
\end{remark}

Consider  now the  configuration  
\[\A':=\{(0,0),(1,0),(0,1),(3,1),(1,2)\}.\]
 The unique conic through these five points, given by the vanishing of 
$q(x,y)=x^2-2xy+2y^2-x-2y$, also goes through the lattice points $(3,3)$, $(4,3)$ 
and $(4,2)$. Thus, adding any one of these three points to $\A'$  gives examples 
of $2$-selfdual surfaces  in $\mathbb P^5$ that are non-smooth.

If we add more than one point, then the surface cannot be 2-selfdual. 
Because if it were, then we would have $c_2\geq 2$, 
but this is not possible since the surface is not 2-Cayley. 
However, if we add all three points, we get a 3-selfdual surface  \cite{M95}.

The polynomial $q$ defines a conic that passes through eight lattice points.
In fact, it is difficult in general to get integer polynomials with many integer roots, see e.g.~\cite{RVVZ01} 
and the references therein. We can extract for instance the following simple example from their constructions.  
Consider three integer numbers $m_1, m_2, m_3$ and consider the univariate 
polynomial $f(x) = \prod_{i=1}^3 (x-m_i)$. 
Set $q_2(x,y) :=(f(x) - f(y))/(x-y) \in \Z[x,y]$, which has degree two. Thus, $q_2$ vanishes at the six
 lattice points $(m_i, m_j), j \not=i$, while $\binom{2+2}2$ is also equal to six. 
The configuration $\A$ given by these six points is
not $2$-jet spanned; in fact, the rank of $A^{(2)}$ equals five, and so $c_1=1$ and
$\A$ is $2$-self dual because it is $2$nap. In fact, when $\{m_1, m_2, m_3\} = \{0,1,2\}$
we get a reflexion of the lattice configuration defining the hexagon in Togliatti's example.
Indeed, $q_2(2-x,y)=q_1(x,y)$, where $q_1$ defines the conic in the Togliatti example.

\subsection{Other smooth non-complete examples}

In his BA Thesis \cite{M95} Mulliken studies (higher) selfdual toric varieties and links them to the property 
that the lattice set is centrally symmetric.  A lattice set $\A:=\{a_1, \ldots, a_N\}\subset \mathbb Z^n$ is 
centrally symmetric if it is symmetric with respect to the midpoint $m:=\frac{1}{|\A|}\sum_i a_i$, i.e.,  
if $a\in \A$ if and only if $2m-a\in \A$. Mulliken's definition of (higher) selfduality is stronger than ours. 
For example, the non centrally symmetric lattice configuration
\[ \A=\{(0,0),(1,0),(1,1), (0,2)\} \]
gives a toric variety rationally parameterized as
\[(t_1,t_2) \mapsto (1:t_1:t_1t_2:t_2^2), \, t_1, t_2 \not=0.\]
Its dual variety is given by the parameterization with exponents in $-\A$ as in~\eqref{eq:actiondual}
\[(t_1,t_2)\mapsto (-1:2t_1^{-1}: -2t_1^{-1}t_2^{-1}:t_2^{-2}), \, t_1, t_2\not= 0,\]
or calling $s_i=t_i^{-1}$,
\[(s_1,s_2)\mapsto (-1:2s_1: -2s_1s_2:s_2^{2}), \, s_1, s_2\not= 0,\]
with weights in $\A$.
The dual variety is certainly projectively equivalent to the original variety, 
hence $X_\A$ is $1$-selfdual according to our definition (see Figure \ref{fig:1}).
But  it is not self dual in Mulliken's terminology. 

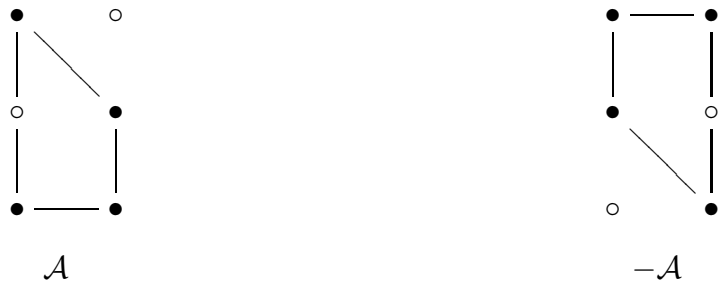
\begin{figure}[h]
 \begin{multicols}{2}
 \centerline{
\xymatrix{
 {\bullet}\ar@{-}[d]\ar@{-}[dr]&{\circ} \\
    {\circ}\ar@{-}[d]&{\bullet}\ar@{-}[d] \\
     {\bullet}\ar@{-}[r]&{\bullet}\\ 
}
}
\bigskip

\centerline{$\mathcal A$ }

\centerline{
\xymatrix{
 {\bullet}\ar@{-}[r]\ar@{-}[d]&{\bullet}\ar@{-}[d] \\
   {\bullet}\ar@{-}[dr]&{\circ}\ar@{-}[d]\\
     {\circ}&{\bullet}\\ 
}
}
\bigskip

\centerline{$-\mathcal A$}
  \end{multicols}
  \caption{A non centrally symmetric selfdual configuration.}
  \label{fig:1}
\end{figure}

\begin{remark}
 Mulliken notices in  \cite[1.5.2]{M95} an ``unexplained'' symmetry  of the matrices $A^{(k-i)}$ and $\Ker A^{(i)}$ 
for a $k$-selfdual variety \cite[p.~9]{M95}. This has a natural explanation: the $k$-selfduality implies
 that $\Ker A^{(i)}$ is the transpose of the $(k-i)$th 
jet matrix of the dual variety, hence equal to $A^{(k-i)}$ because of the $k$-selfduality. 
 More precisely, in the central symmetric case, if $X=X_\A$ is $k$-selfdual, then $\Ker A^{(k)}$ has rank $1$. The map from 
 $X_\A$ to $X^{(k)}=X_\A^{(k)}$ is given by $(\mathcal O_X^{(N+1)})^\vee \to (\Ker j_k)^\vee=:\mathcal L'$. 
We also have (generically) exact sequences (cf. \cite{P83})
 \[0\to (\mathcal P^{k-i}_{X^{(k)}}(\mathcal L'))^\vee \overset{(j_{k-i})^\vee}
\longrightarrow \mathcal O_X^{(N+1)}\overset{j_i}\longrightarrow \mathcal P^i_X(\mathcal L) \to 0. \]
 At the general point, the map $j_i$ to the right is given by the matrix $A^{(i)}$ 
and the left map $(j_{k-i})^\vee$ by the transpose of the matrix $A^{(k-i)}$, 
 which, because of exactness of the sequence, is equal to $\Ker A^{(i)}$.
 \end{remark}

Mulliken proposed the following $2$-parameter family of centrally symmetric surfaces:
\[\A=\{(0,0),(1,0),(0,1),(2,1),(c-1,e),(c,d-1),(c,d),(c-1,d),(c-2,d-1),(1,d-e)\},\]
where $(c,d,e)=(j,1+(j-2)m,(j-3)m)$, where $j\ge 5$, $m\ge 1$.
For $j= 5$, $m= 1$, this is a hexagon with four interior lattice points, 
and it defines a smooth surface. For $j\ge 6$, $m=1$, 
we get an octagon with two interior lattice points, which defines a smooth surface.
For $j\ge 5$, $m\ge 2$, all ten lattice points are vertices, 
so we get decagons with no interior lattice points, and the surfaces are smooth.
Mulliken conjectured that these surfaces are $3$-sefdual, and checked it for  $j=5$ and $m\ge 2$.  
In fact, we checked that for any choice of integers $(c,d,e)$ with $d\neq 1$,
$d\neq 2 (c-1)$, the configuration $\A$ is $3$nap and $c_3=1$. Therefore, $X_\A$ is $3$-selfdual, 
again by item~(i) in Proposition~\ref{prop:rCayley} (but not smooth for general choices of $(c,d,e)$).

Perkinson's octagon in \cite[Thm.~3.2, (4), p.~493]{Pe00} gives another 
example of a $3$-selfdual smooth surface \cite[1.5.2, p.~9]{M95}. If we remove any 
two non-adjacent points, then the resulting hexagon gives a $2$-selfdual, 
possibly non-smooth and possibly non centrally symmetric, surface (see e.g. Figure \ref{fig:2}).

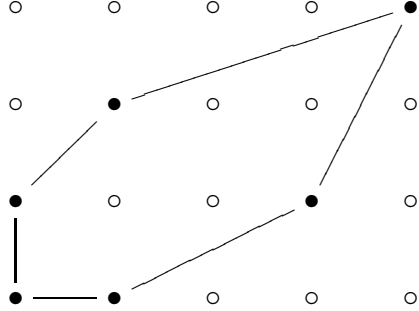
\begin{figure}[h]
\centerline{
\xymatrix{
  {\circ}&{\circ}&{\circ} &{\circ}&{\bullet} \\
  {\circ}&{\bullet}\ar@{-}[urrr]&{\circ} &{\circ}&{\circ} \\
   {\bullet}\ar@{-}[ur]&{\circ}&{\circ} &{\bullet}\ar@{-}[uur]&{\circ} \\
    {\bullet}\ar@{-}[u] \ar@{-}[r] &{\bullet}\ar@{-}[urr]&{\circ} &{\circ}&{\circ} \\
   }}
   \caption{A non centrally symmetric, non smooth 2-selfdual configuration.}
   \label{fig:2}
   \end{figure}
\medskip

Perkinson's dodecagon in  \cite[Thm.3.2, (5), p.~493]{Pe00} is an example of a smooth $5$-selfdual surface. 
Note also that in the two and  three dimensional smooth examples of Perkinson, 
with $c_k=1$, some (interior) lattice points in the convex hull are always omitted 
(thus $\A$ is not complete).

The dotted lattice points in the ``incomplete'' square in Figure~\ref{fig:3} define a configuration $\A$, 
which gives an example of  a $4$-selfdual smooth toric surface which is not centrally symmetric. 
The projective variety $X_\A$ is smooth because at each of the four vertices, we have then 
neighboring lattice points in both directions. We computed that $\A$ is $4$nap, and  $\rk A^{(4)}=15$, 
so that $c_4=\dim \Ker A^{(4)}=1$ and $X_\A$ is $4$-selfdual by item (i) in Proposition~\ref{prop:rCayley}.
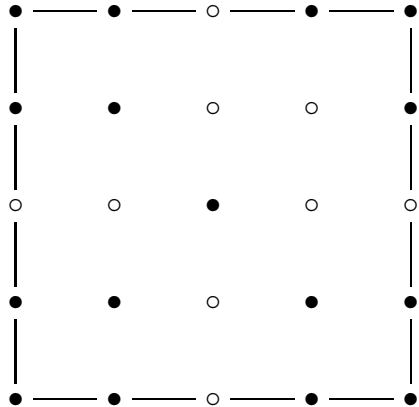
\begin{figure}[h]
\centerline{
\xymatrix{
  {\bullet}\ar@{-}[r] &{\bullet}\ar@{-}[r]&{\circ} \ar@{-}[r]&{\bullet}\ar@{-}[r]&{\bullet}\ar@{-}[d]  \\
  {\bullet}\ar@{-}[u] &{\bullet}&{\circ} &{\circ}&{\bullet}\ar@{-}[d] \\
   {\circ}\ar@{-}[u] &{\circ}&{\bullet} &{\circ}&{\circ}\ar@{-}[d] \\
    {\bullet}\ar@{-}[u] &{\bullet}&{\circ} &{\bullet}&{\bullet}\ar@{-}[d] \\
     {\bullet}\ar@{-}[u]\ar@{-}[r] &{\bullet}\ar@{-}[r]&{\circ} \ar@{-}[r]&{\bullet}\ar@{-}[r]&{\bullet}\\ 
   }}
   \caption{A non centrally symmetric smooth 4-selfdual configuration.}
   \label{fig:3}
   \end{figure}
   \medskip

\subsection{Smooth complete examples}\label{ssec:emilia}

The following $3$-selfdual complete toric surface occurs in~\cite{MM16} and~\cite{Betal15} under (affinely equivalent) different
disguises. It corresponds to the blow-up at two points of 
the Veronese-Segre embedding of ${\mathbb P}^1 \times
{\mathbb P}^1$ polarized by ${\mathcal O}(3,2)$.  Both configurations are depicted in Figure~\ref{fig:32}. 
We will show in Theorem~\ref{th:complete} that all Veronese-Segre embeddings
in any dimension are (smooth, complete) selfdual.

\begin{figure}[h]
 \begin{multicols}{2}
 \centerline{
\xymatrix{
 {\bullet}\ar@{-}[d]\ar@{-}[r]&{\bullet}\ar@{-}[r]&{\bullet}\ar@{-}[dr]&{\circ} \\
  {\bullet}\ar@{-}[dr]&{\bullet}&{\bullet}&{\bullet}\ar@{-}[d] \\
    {\circ}&{\bullet}\ar@{-}[r]&{\bullet}\ar@{-}[r]&{\bullet} \\
}
}

\centerline{
\xymatrix{
 {\bullet}\ar@{-}[r]\ar@{-}[d]&{\bullet}\ar@{-}[dr]&{\circ}&{\circ} \\
   {\bullet}\ar@{-}[dr]&{\bullet}&{\bullet}\ar@{-}[dr]&{\circ}\\
     {\circ}&{\bullet}\ar@{-}[dr]&{\bullet}&{\bullet}\ar@{-}[d]\\ 
     {\circ}&{\circ}&{\bullet}\ar@{-}[r]&{\bullet}\\
}
}

  \end{multicols}
  \caption{The example in \S~\ref{ssec:emilia}: As in~\cite{MM16} on the left, as in~\cite{Betal15} on the right.}\label{fig:32}
\end{figure}
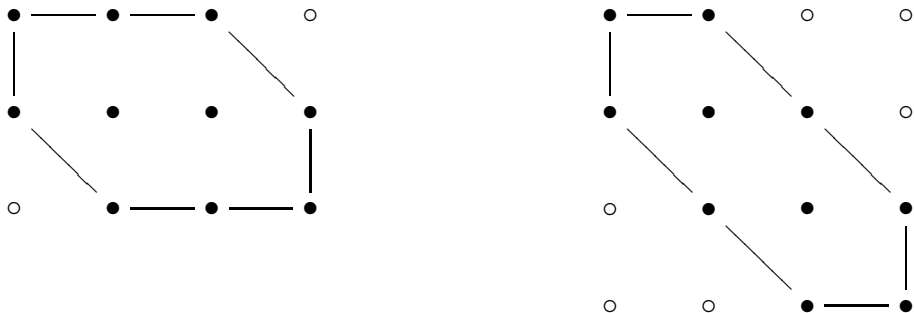


\section{General constructions of higher selfdual toric varieties}\label{sec:general}

We give  some general constructions of $k$-selfdual projective toric embeddings.  

\subsection{In terms of Hilbert functions}

Let $I(\A)$ denote the ideal of the points in $\A=\{(1,a_0), \dots, (1,a_n)\}$. 
The value of the Hilbert function of $I(\A)$ at  $k$ is the codimension in the linear space of 
homogeneous polynomials of degree $k$ in $n+1$ variables  of those polynomials that belong to $I(\A)$. 
It equals the affine Hilbert function $H_{I(\A)}(k)$, which
is the codimension in the linear space $\Q[x]_{\le k}$ 
of polynomials in $n$ variables of degree at most $k$, of those 
polynomials vanishing on $\{a_0, \dots, a_N\}$. 
Thus, for any $k$, ${I(\A)}(k) $ can be identified with the left kernel of $A^{(k)}$ 
(by the proof of Lemma~\ref{lem:knap}, cf. also Proposition~1.1 in~\cite{Pe00}), and so
$H_{I(\A)}(k) =\binom{n+k}{k} - \rk A^{(k)} = \binom{n+k}{k} +1 - d_k$.

The following proposition gives a criterion for a configuration $\A$ which is not $2$-Cayley to be $k$-selfdual.

\begin{prop}\label{prop:Hilbert}
Given points $a_0,\ldots, a_N \in \mathbb Z^n$ such that  $\A:=\{a_0, \ldots, a_N\}$ is 
not $2$-Cayley. Let $k\ge 1$ be given. Then $X_\A$ is $k$-selfdual if and only if 
for all $j\in \{0,\ldots,N\}$,
\begin{equation}\label{eq:H}
H_{I(\A)}(k)=H_{I(\A\setminus \{(1,a_j)\})}(k)
= N.
\end{equation}
Equivalently,
both $\{ Q\in \mathbb Q[x]_{\le k} |\, Q(a_i)=0, i=0,\ldots,N\}$ and
$\{ Q\in \mathbb Q[x]_{\le k} |\, Q(a_i)=0, i=0,\ldots,N, i\neq j\}$ 
(for any $j\in \{0,\ldots,N\}$), have dimension equal to
$\binom{n+k}k -N$.
\end{prop}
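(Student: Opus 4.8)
The plan is to prove the proposition in two stages. First I would reduce $k$-selfduality to the intrinsic conditions ``$\A$ is $k$nap and $c_k=1$'', exploiting the hypothesis that $\A$ is not $2$-Cayley; then I would rewrite each of these two conditions as one of the two equalities in~\eqref{eq:H}. I shall work with the equivalent ``dimension'' formulation stated in the proposition, using repeatedly that, for any subset $S\subseteq\{a_0,\dots,a_N\}$, the space $\{Q\in\Q[x]_{\le k}\mid Q(a_i)=0\text{ for }a_i\in S\}$ is the left kernel of the submatrix of $A^{(k)}$ on the columns indexed by $S$, and hence has dimension $\binom{n+k}{k}$ minus the rank of that submatrix. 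For $S=\A$ this rank is $\rk A^{(k)}=(N+1)-c_k$ by~\eqref{eq:ck}.

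For the reduction, assume first that $X_\A$ is $k$-selfdual. By Theorem~\ref{th:dim}, $\A$ is then $k$nap, so by Lemma~\ref{lem:knap} the linear space $\mathbb P(\Ker A^{(k)})$ meets $T_N^\vee$; in particular $\Ker A^{(k)}\neq 0$ and $c_k\ge 1$ (if $c_k=0$ the dual $X_\A^{(k)}$ would be empty by Proposition~\ref{prop:action}, contradicting $k$-selfduality). By Proposition~\ref{prop:rCayley}(ii), $\A$ is $c_k$-Cayley; since $\A$ is not $2$-Cayley, $c_k\le 1$, and therefore $c_k=1$. Conversely, if $\A$ is $k$nap and $c_k=1$, then $X_\A$ is $k$-selfdual by Proposition~\ref{prop:rCayley}(i). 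Under the standing hypothesis, then, $X_\A$ is $k$-selfdual if and only if $\A$ is $k$nap and $c_k=1$.

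It remains to identify these two conditions with the two equalities in~\eqref{eq:H}. The space $\{Q\in\Q[x]_{\le k}\mid Q(a_i)=0,\ i=0,\dots,N\}$ has dimension $\binom{n+k}{k}-\rk A^{(k)}$, so it equals $\binom{n+k}{k}-N$ exactly when $\rk A^{(k)}=N$, that is, when $c_k=1$; this is the equality $H_{I(\A)}(k)=N$. For the $k$nap condition, fix $j$ and let $B_j$ be the submatrix of $A^{(k)}$ with the $j$-th column deleted. Projecting the row span $R$ of $A^{(k)}$ onto the coordinates different from $j$ has image the row span of $B_j$ and kernel $R\cap\langle e_j\rangle$, so rank--nullity gives $\rk A^{(k)}=\rk B_j+1$ when $e_j\in R$ and $\rk A^{(k)}=\rk B_j$ otherwise. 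By Definition~\ref{def:knap} and Lemma~\ref{lem:knap}, $\A$ is $k$nap precisely when no $e_j$ lies in $R$, i.e.\ when $\rk B_j=\rk A^{(k)}$ for every $j$. Given $c_k=1$, i.e.\ $\rk A^{(k)}=N$, this says $\rk B_j=N$ for all $j$, equivalently $\dim\{Q\in\Q[x]_{\le k}\mid Q(a_i)=0,\ i\neq j\}=\binom{n+k}{k}-N$, i.e.\ $H_{I(\A\setminus\{(1,a_j)\})}(k)=N$ for all $j$. Combining the two translations yields~\eqref{eq:H}.

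The one non-formal ingredient, and the step I expect to be the crux, is this last rank-drop identity for $B_j$ read through Lemma~\ref{lem:knap}: it is what turns the combinatorial $k$nap condition (no standard basis vector in the row span of $A^{(k)}$) into the numerical statement that deleting any single point of $\A$ does not lower below $N$ the number of conditions imposed in degree $\le k$. The remaining arguments are the bookkeeping relating $c_k$, $\rk A^{(k)}$, and the dimensions of the spaces of vanishing polynomials.
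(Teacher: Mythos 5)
Your proof is correct and follows essentially the same route as the paper's: using the non-$2$-Cayley hypothesis together with Proposition~\ref{prop:rCayley} to reduce $k$-selfduality to ``$\A$ is $k$nap and $c_k=1$'', and then translating these two conditions into the rank statements $\rk A^{(k)}=N$ and $\rk B_j=N$ for all $j$, which are exactly the equalities in~\eqref{eq:H}. The only differences are presentational: where the paper invokes Lemma~\ref{lem:knap}(c) to convert $k$napness into the equality of Hilbert functions, you rederive this by a rank--nullity argument on the column-deleted matrix, and your handling of the reduction step (citing Proposition~\ref{prop:rCayley}(i),(ii) and noting $c_k\ge 1$) is in fact more precise than the paper's bare appeal to Theorem~\ref{th:dim}.
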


\begin{proof}
Since $\A$ is not $2$-Cayley, we know by Theorem~\ref{th:dim} that $X_\A$ 
is $k$-selfdual if and only if $\A$ is $k$nap and the kernel of 
$A^{(k)}$ has rank $c_k=1$. Then, 
since $A$ is a matrix of size $\binom{n+k}k \times (N+1)$, $\rk A^{(k)} =N$ and thus
$H_{I(\A)}(k)=\binom{n+k}k-N$. By Lemma \ref{lem:knap} (c),  $\A$ is $k$nap if 
$H_{I(\A)}(k)=H_{I(\A\setminus \{(1,a_j)\})}(k)$ for all $j=0,\ldots,N$.

This can be equivalently phrased in terms of the dimension of the left kernels 
of $A^{(k)}$ and its minor corresponding to the deletion of any column 
$j$, as in the second part of the statement.
\end{proof}

In particular, we deduce:

\begin{cor}\label{cor:general}
Any choice of $\binom{n+k}k+1$ \emph{general} points
 in $\mathbb Z^n$ gives a configuration $\A$ such that $X_\A$ is $k$-selfdual.
\end{cor}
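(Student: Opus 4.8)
The plan is to reduce $k$-selfduality to the nonvanishing of the maximal minors of $A^{(k)}$ and then invoke item~(i) of Proposition~\ref{prop:rCayley}. Since $|\A| = \binom{n+k}k+1$, here $N = \binom{n+k}k$, so $A^{(k)}$ has size $\binom{n+k}k \times (N+1) = N\times(N+1)$. For each $i\in\{0,\dots,N\}$ let $B_i$ denote the $N\times N$ submatrix of $A^{(k)}$ obtained by deleting the $i$-th column. I would first prove the \emph{pointwise} claim: if $\det B_i \neq 0$ for every $i$, then $X_\A$ is $k$-selfdual. Indeed, invertibility of each $B_i$ forces $\rk A^{(k)} = N$, hence $c_k = (N+1) - N = 1$; and it forces the left kernel of each $B_i$ to be zero, i.e.\ no nonzero polynomial of degree at most $k$ vanishes on $\{a_j : j\neq i\}$, which is exactly the $k$nap condition by Lemma~\ref{lem:knap}\eqref{it3}. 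With $\A$ being $k$nap and $c_k=1$, Proposition~\ref{prop:rCayley}(i) yields that $X_\A$ is $k$-selfdual.

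The remaining work is a genericity argument. Each $\det B_i$ is a polynomial with integer coefficients in the coordinates of the points $a_j$, $j\neq i$, and the key point is that it is \emph{not} identically zero. This holds because $\dim_\Q \Q[x]_{\le k} = \binom{n+k}k = N$ and the monomials $x^\alpha$ with $|\alpha|\le k$ are linearly independent as functions on $\K^n$; consequently there exists a unisolvent set of $N$ interpolation nodes, that is, $N$ points at which the evaluation functionals form a basis of $\Q[x]_{\le k}^\vee$, and at such points the corresponding determinant is nonzero. Therefore, for each $i$, the locus $\{\det B_i \neq 0\}$ is a nonempty Zariski-open subset of $(\K^n)^{N+1}$, and the finite intersection $U$ over all $i$ is again nonempty Zariski-open, hence Zariski-dense. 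Finally, since a nonzero polynomial over $\Q$ cannot vanish on all of $\Z^n$ (as $\Z^n$ is Zariski-dense in $\K^n$), the set $U\cap (\Z^n)^{N+1}$ is nonempty; these lattice configurations are precisely the \emph{general} ones in the statement, and for each of them $X_\A$ is $k$-selfdual.

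I expect the only non-formal step to be the existence of a unisolvent node set for $\Q[x]_{\le k}$ — equivalently, that the maximal minors of $A^{(k)}$ do not vanish identically — since everything else is bookkeeping about ranks and Zariski-openness. This route via Proposition~\ref{prop:rCayley}(i) has the advantage of bypassing the hypothesis of Proposition~\ref{prop:Hilbert}; alternatively, one could deduce the corollary directly from Proposition~\ref{prop:Hilbert}, at the cost of additionally checking that general points are not $2$-Cayley. That check is itself a short dimension count: by Remark~\ref{rem:1Cayley} a $2$-Cayley configuration must lie on two parallel hyperplanes, a condition carved out by a family of dimension roughly $n+1$, which a general set of $N+1 \ge n+2$ points does not satisfy. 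Either way the combinatorial heart of the proof is the same nondegeneracy of $A^{(k)}$ at a general lattice configuration.
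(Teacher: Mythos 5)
Your proof is correct, and its skeleton matches the paper's: both reduce the corollary to the fact that at a general configuration of $\binom{n+k}k+1$ points all maximal minors of the $N\times(N+1)$ matrix $A^{(k)}$ are nonzero, which yields $c_k=1$ together with the $k$nap condition (via Lemma~\ref{lem:knap}), and hence $k$-selfduality. The differences lie at the two ends of the argument. For the genericity input, the paper cites the classical fact (Castelnuovo) that general points impose independent conditions on degree-$k$ polynomials, and substantiates it in Remark~\ref{rem:general} by exhibiting an explicit unisolvent configuration, namely the lattice points of the standard simplex of size $k$, for which $A^{(k)}$ becomes upper triangular with unit diagonal; your argument proves the same nonvanishing statement nonconstructively (evaluation functionals span $\Q[x]_{\le k}^\vee$, hence some $N$ of them form a basis), and your explicit use of Zariski density of $(\Z^n)^{N+1}$ makes precise what ``general lattice points'' means, a point the paper leaves implicit. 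For the concluding step, the paper invokes Proposition~\ref{prop:Hilbert}, whose statement carries the hypothesis that $\A$ is not $2$-Cayley --- a hypothesis the paper's proof of the corollary never verifies (harmless, since the implication actually used only needs $c_k=1$ and $k$nap, but as written it is a loose end); you instead go directly to Proposition~\ref{prop:rCayley}(i), which requires exactly $k$nap and $c_k=1$ and nothing more, so your route is self-contained and slightly cleaner on this point. Your closing remark, that one could alternatively verify that general points are not $2$-Cayley by a dimension count and then apply Proposition~\ref{prop:Hilbert} as the paper does, is also correct.
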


\begin{proof}
Let $\A$ be any general configuration of  $\binom{n+k}k+1$ (lattice) points. 
It is well known that if we take away any point, the (general) points in $\A \setminus \{(1,a_j)\}$
 impose $\binom{n+k}{k}$ number of independent conditions on polynomials of
degree $k$ passing through them, that is, that $H_{I(\A)}(k) = \binom{n+k}k$
 (this goes back at least to Castelnuovo~\cite{H60}).  This means that 
any maximal minor of $A^{(k)}$ is non-zero. Thus, $c_k=1$ 
and the left kernel of $A^{(k)}$ and of any of its maximal minors
is zero. The result follows from Proposition~\ref{prop:Hilbert}. 
\end{proof}

\begin{remark}\label{rem:general}
For the convenience of the reader, we include a simple proof that for any general configuration of
$\binom{n+k}{k}$ points there is no non-zero polynomial of degree at most $k$ which vanishes on them.
It is enough to show that this is the case for a particular configuration.
We consider $\A$ equal to the lattice points in the 
standard simplex of size $k$ in $n$ dimensions. So, both the rows and
the columns of $A^{(k)}$ are indexed by $\Delta_k=\{\alpha \in \N^n \, | \, |\alpha| \le k\}$.

For any $\alpha \in \Delta_k$,  denote by $m_\alpha$ the polynomial
\begin{equation}\label{eq:ma}
m_\alpha \, = \prod_{\alpha_i > 0} \frac{x_i (x_i-1) \dots (x_i-\alpha_i +1)}{\alpha_i!}. 
\end{equation}
Note that $m_\alpha(\alpha)=1$ for any $\alpha$ and $m_\alpha(\beta) =0$ whenever there exists
an index $i$ with $\beta_i > \alpha_i$.
We order these polynomials by
ordering their indices $\alpha$   as in Definition~\ref{def:Ak}.
We similarly define the vectors $\mb w_\alpha \in \Z^{N+1}$, obtained as the \emph{coordinatewise evaluation} of 
$m_\alpha$ at the points in $\Delta_k$.  Let $A_m^{(k)}$ be the
 $\binom{n+k}k \times (N+1)$ integer matrix with rows $\mb w_\alpha, \alpha \in \Delta_k$,
 where we order the columns and the rows with
the same ordering. It is straightforward to check that $A_m^{(k)}$ is an upper triangular matrix with $1$'s along
the main diagonal and therefore it has maximal rank $\binom{n+k}k$. 
Moreover, there exists an invertible upper triangular matrix
$M$ with $1$'s along the main diagonal such that $M \, A^{(k)}= A_m^{(k)}$ and so
the rank of $A^{(k)}$ is also maximal.
\end{remark}

\subsection{Subconfigurations of {k}-selfdual configurations}

The following result,
which follows from  Theorem~\ref{thm:eL}, shows
how to find $k$-selfdual subconfigurations of a given $k$-selfdual configuration $\A$. 
It extends Proposition~4.20 in \cite{BDR11} to
any $k \in \N$ and can be proved similarly. 

\begin{prop}\label{subset}
Assume $X_\A$ is $k$-selfdual.
Let $\D\subseteq \A$ be an arbitrary subset of $\A$. 
Then, either $\D$ is not $k$nap (i.e., $D^{(k)}$ is a pyramid), 
or $X_\D$ is $k$-selfdual.
\end{prop}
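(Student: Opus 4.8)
The plan is to reduce to the combinatorial criterion of Theorem~\ref{thm:eL} by tracking how the pair $(\Ker A^{(k)}, \{b_i\})$ behaves under passage to a subconfiguration. If $\D$ is not $k$nap there is nothing to prove, so throughout I would assume $\D$ is $k$nap and verify the two hypotheses of Theorem~\ref{thm:eL} for $\D$: that $\D$ is $k$nap (granted) and that each $e_{L'}$ lies in the rowspan of $D$.

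First I would record the relation between the two kernels. Deleting from $A^{(k)}$ the columns indexed by $\A\setminus\D$ produces $D^{(k)}$, and extending a vector by zeros on $\A\setminus\D$ identifies $\Ker D^{(k)}$ with the subspace $W:=\{v\in\Ker A^{(k)} : v_i=0 \text{ for all } i\notin\D\}$ of $\Ker A^{(k)}$. Reading each $b_i$ as the restriction to $\Ker A^{(k)}$ of the $i$-th coordinate functional, the row vectors $b_i^\D$ attached to the subconfiguration are the further restrictions of the $b_i$ to $W$; that is, $b_i^\D=\rho(b_i)$ for the linear restriction map $\rho$ dual to the inclusion $W\hookrightarrow \Ker A^{(k)}$. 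Since $\D$ is $k$nap, Lemma~\ref{lem:knap} ensures $\mathbb P(\Ker D^{(k)})$ meets the torus, so $b_i^\D\neq 0$ for every $i\in\D$ and the lines through the $b_i^\D$ are genuinely defined.

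Next I would compare the two line-partitions. Because $\rho$ is linear, collinear vectors $b_i,b_j$ are sent to collinear vectors $b_i^\D,b_j^\D$; hence the partition of $\D$ into blocks $\Gamma'_s:=\{i\in\D : b_i^\D\in L'_s\}$, over the lines $L'_s$ in $\R^{c_k(\D)}$, is a coarsening of the restriction to $\D$ of the partition $\{\Gamma_j\}$ attached to $\A$. Concretely, each $\Gamma'_s$ is a disjoint union of sets of the form $\Gamma_j\cap\D$. This step, passing correctly from the line structure of $\A$ to that of $\D$ through $\rho$ and using $k$nap-ness to keep it well defined, is the main point of the argument; the remainder is bookkeeping.

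Finally I would transport the Cayley relation. Since $X_\A$ is $k$-selfdual, relation~\eqref{eq:vi} gives $\sum_{i\in\Gamma_j}v_i=0$ for every $v\in\Ker A$ and every $j$. Given $w\in\Ker D$, extend it by zeros to $\tilde w\in\R^{N+1}$; then $A\tilde w=Dw=0$, so $\tilde w\in\Ker A$ is supported on $\D$, and applying the relation yields $\sum_{i\in\Gamma_j\cap\D}w_i=0$ for every $j$. Summing over the blocks $\Gamma_j\cap\D$ that make up a given $\Gamma'_s$ gives $\sum_{i\in\Gamma'_s}w_i=0$ for every $w\in\Ker D$ and every $s$, which is precisely the statement that each $e_{L'_s}$ lies in the rowspan of $D$. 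As $\D$ is $k$nap and this rowspan condition holds, Theorem~\ref{thm:eL} shows $X_\D$ is $k$-selfdual, completing the proof.
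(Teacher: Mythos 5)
Your proposal is correct and takes essentially the same route as the paper: the paper gives no detailed argument, saying only that the proposition ``follows from Theorem~\ref{thm:eL}'' (in analogy with Proposition~4.20 of \cite{BDR11}), and your proof is precisely a careful working-out of that deduction. In particular, your key steps---identifying $\Ker D^{(k)}$ with the zero-extension subspace of $\Ker A^{(k)}$, observing that the line partition of $\D$ coarsens the restriction of the partition $\{\Gamma_j\}$ of $\A$, and transporting the relations $\sum_{i\in\Gamma_j}v_i=0$ from $\Ker A$ to $\Ker D$ via extension by zero---are exactly the bookkeeping the paper leaves to the reader.
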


When a configuration $\A$ is not $k$nap, the following lemma  translates to our setting Theorem~2.2.1 in~\cite{M95}.

\begin{lemma}\label{lem:DJ}
Assume a lattice configuration $\A$ with $c_k=1$ is not $k$nap. 
Let $J$ denote the set of indices of zero coordinates of the elements in
$\Ker A^{(k)}$ and define $\D_J = \{(1,a_i)\, : \, i\notin J\}$. Then, $X_{\D_J}$ is $k$-selfdual.
\end{lemma}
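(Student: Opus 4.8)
The plan is to reduce $X_{\D_J}$ to a $k$nap configuration with $c_k=1$ and then invoke item~(i) of Proposition~\ref{prop:rCayley}, which guarantees $k$-selfduality under exactly these two hypotheses. The set $J$ consists of those indices $i$ for which the (unique, up to scalar, since $c_k=1$) generator $\nu$ of $\Ker A^{(k)}$ satisfies $\nu_i=0$. By the definition of $J$ right before Proposition~\ref{prop:action}, $\bP(\Ker A^{(k)})$ is contained in the coordinate space $P_{N,J}$ but in no smaller one, so $J$ is precisely the support-complement of $\nu$. The configuration $\D_J$ keeps exactly the columns indexed by $i\notin J$, i.e.\ the columns on which $\nu$ is nonzero.

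First I would identify the matrix $D_J^{(k)}$ with the submatrix of $A^{(k)}$ obtained by deleting the columns in $J$; this is legitimate because forming $A^{(k)}$ from $A$ is a columnwise (pointwise-evaluation) operation, so discarding some points of $\A$ simply discards the corresponding columns, commuting with the passage to $A^{(k)}$. Next I would show $\Ker D_J^{(k)}$ still has dimension $1$, generated by the restriction $\nu|_{\{i\notin J\}}$ of $\nu$. That this restricted vector lies in the kernel is immediate from $A^{(k)}\nu=0$ together with $\nu_i=0$ for $i\in J$: the deleted columns contribute nothing to the product, so the surviving columns already summed to zero. For the reverse inequality $c_k(\D_J)\le 1$, I would argue that any kernel vector of $D_J^{(k)}$ extends by zeros on $J$ to a kernel vector of $A^{(k)}$, which by $c_k(\A)=1$ must be a multiple of $\nu$; since $\nu$ is nonzero on every surviving coordinate, this forces the original vector to be a multiple of $\nu|_{\{i\notin J\}}$.

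The key remaining point, and the step I expect to be the main obstacle, is verifying that $\D_J$ is $k$nap. By Lemma~\ref{lem:knap}(\ref{it2}), $k$nap-ness of $\D_J$ is equivalent to the existence of a point in $\Ker D_J^{(k)}$ with all nonzero coordinates. But the generator $\nu|_{\{i\notin J\}}$ has, by the very construction of $J$, \emph{all} its coordinates nonzero: we deleted precisely the indices where $\nu$ vanished. Hence $\nu|_{\{i\notin J\}}$ itself witnesses the $k$nap condition, and $\D_J$ is $k$nap. Here one must be slightly careful to confirm that no \emph{new} zero coordinate can appear, i.e.\ that the surviving coordinates of $\nu$ are genuinely nonzero rather than becoming degenerate after restriction; this is automatic since restriction does not alter individual entries.

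Having established that $\D_J$ is $k$nap and that $c_k(\D_J)=1$, I would conclude by Proposition~\ref{prop:rCayley}(i) that $X_{\D_J}$ is $k$-selfdual, completing the proof. It is worth noting that this argument is the natural ``repair'' of a non-$k$nap configuration: passing from $\A$ to $\D_J$ exactly removes the vertices of the pyramid structure obstructing $k$nap-ness (the points $a_i$ with $i\in J$ corresponding to coordinate hyperplanes containing $\bP(\Ker A^{(k)})$), while leaving the dimension of the kernel untouched, so that the hypotheses of the $c_k=1$ criterion are restored.
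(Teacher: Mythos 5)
Your proof is correct and follows essentially the same route as the paper: identify $\Ker D_J^{(k)}$ as the one-dimensional space generated by the restriction of the generator $\nu$ of $\Ker A^{(k)}$ to its nonzero coordinates, observe that this vector witnesses the $k$nap condition for $\D_J$, and conclude by Proposition~\ref{prop:rCayley}(i). The paper states exactly this argument (more tersely, leaving the extension-by-zeros and column-deletion details implicit), so your write-up is simply a fuller version of the same proof.
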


The proof is straightforward. In fact, $\D_J$ is $k$nap and the kernel of the associated matrix
$D_J^{(k)}$ has dimension $1$, as it is generated by the vector of nonzero coordinates of a generator of $\Ker A^{(k)}$.
One example of the use of this lemma is the Togliatti surface we recall in \S~\ref{ssec:Togliatti}. 

\subsection{Joins of selfdual configurations}

Another way of constructing $k$-selfdual configuration is by the projective
 join of two (or more) such configurations, a particular case of Cayley configurations.
Recall the definition of the join of two varieties over a field $\K$:

\begin{definition} \label{def:join}
Let $V_1, \dots, V_s$ be  finite dimensional $\K$-vector spaces 
and let
$X_1 \subseteq \bP(V_1)$, \dots, $X_s \subseteq \bP(V_s)$ be
 projective varieties. The  \emph{join}  of  $X_1, \dots,
X_s$ is the projective subvariety of $\bP(V_1 \oplus \cdots \oplus V_s)$
defined by
 \[
\opJ(X_1, \dots, X_s)= \overline{\bigl\{[x_1:\cdots :x_s]\, |\,[x_i]
\in X_i \bigr\}}.
\]
\end{definition}

We have  $\dim
\opJ(X_1,\dots,X_s)=\sum \dim X_i+s-1$. Note that, in the ``trivial'' case when $X_i=\mathbb P(V_i)$ for all $i$, 
then $\opJ(X_1,\dots,X_s)=\bP(V_1 \oplus \cdots \oplus V_s)$, 
and that in all other cases (with $s\ge 2$), $\opJ(X_1,\dots,X_s)$ 
is singular at all points of the embedded varieties $X_i\subset \mathbb P(V_i)\subset  \bP(V_1 \oplus \cdots \oplus V_s)$. 
This last fact can be seen 
by using the Jacobian criterion on the generators of the ideal defining $\opJ(X_1,\dots,X_s)$.

Given  projective toric embeddings $X_{\A_i}\subseteq \mathbb P^{N_i}$, $i=1, \dots, s$,
their join  is also a toric variety. Assume $\A_i \subset \Z^{n_i}$,
and consider the configuration
$\A=\A_1\times\{0\} \times \dots \times \{0\}\cup
\{0\}\times \A_2\times\{0\} \times \dots \times \{0\}\cup \dots \subset \Z^{n_1+\cdots +n_s+s} $.
The projective toric variety associated to $\A$ is  the join
$X_\A=\opJ(X_{\A_1},\dots, X_{\A_s})$. The matrix $A$ associated with $\A$ has then the block form
\[A=\left(\begin{array}{cccccc}\label{arr:B}
A_1 &   0  &   0  &   \cdots   &  0&   0\\
0   &  A_2 &  0 &  \cdots  &  0&  0\\
\vdots & \vdots&\vdots & & \vdots & \vdots \\
0   &  0  & 0 & \cdots &  A_{s-1} &   0\\
0   &  0  & 0 & \cdots & 0 &   A_{s}
\end{array}\right),
\]
where $A_i$ is the matrix associated with $\A_i$.
Note that the configuration of the join $X_\A$ is $s$-Cayley. 
\medskip

The following proposition provides examples of $k$-selfdual configurations
for any value of  $\dim \Ker A^{(k)}$. 

\begin{prop}\label{prop:jk}
Assume $\A_1, \dots, \A_s$ are $k$nap and $k$-selfdual. 
Then the join $X_\A=\opJ(X_{\A_1},\dots,X_{\A_s})$ is $k$nap and $k$-selfdual, 
with 
$$\dim \Ker A^{(k)}=\dim \Ker A_1^{(k)}+\cdots +\dim \Ker A_s^{(k)}\ge s.$$
\end{prop}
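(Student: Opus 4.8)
The plan is to exploit the block structure of the matrix $A$ for the join, together with the combinatorial characterization of $k$-selfduality given in Theorem~\ref{thm:eL}. First I would analyze how the osculating matrix $A^{(k)}$ of the join relates to the individual matrices $A_i^{(k)}$. The key observation is that since $\A$ is built from the $\A_i$ by placing them in disjoint coordinate blocks (the join configuration), a monomial $x^\alpha$ of degree at most $k$ evaluated at the points of $\A$ will vanish identically on any block $\A_j$ unless $\alpha$ is supported on the variables belonging to that block. More precisely, because each point of $\A$ has its ``$\A_i$-coordinates'' zero except in one block, the value $\mb v_\alpha$ restricted to the columns coming from $\A_j$ is nonzero only when the monomial $x^\alpha$ involves solely the variables of the $j$th block (including the block's homogenizing variable). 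This will let me identify $\Ker A^{(k)}$ as the direct sum $\Ker A_1^{(k)} \oplus \cdots \oplus \Ker A_s^{(k)}$, which immediately gives the dimension formula $\dim \Ker A^{(k)} = \sum_i \dim \Ker A_i^{(k)}$.

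Next I would verify the $k$nap property for the join. Since each $\A_i$ is $k$nap, by Lemma~\ref{lem:knap}\eqref{it2} there is a vector in $\Ker A_i^{(k)}$ with all nonzero coordinates; placing these side by side (using the block decomposition of the kernel just established) produces a vector in $\Ker A^{(k)}$ with all nonzero coordinates, so $\A$ is $k$nap. The lower bound $\dim \Ker A^{(k)} \ge s$ then follows because each $\A_i$ being $k$nap forces $\dim \Ker A_i^{(k)} \ge 1$.

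For the $k$-selfduality itself, I would apply the combinatorial criterion of Theorem~\ref{thm:eL}: I must show that for every line $L$ through the origin in $\R^{c_k}$, the vector $e_L$ lies in the rowspan of $A$. Here is where the direct sum decomposition of the kernel is crucial. Choosing a $\Z$-basis of $\Ker A^{(k)}$ that respects the block decomposition, the row vectors $b_i$ attached to columns in block $j$ lie entirely in the coordinate subspace $\R^{c_k^{(j)}}$ corresponding to $\Ker A_j^{(k)}$. Consequently any line $L$ either meets only the $b_i$ coming from a single block $j$, in which case $e_L$ is supported inside block $j$ and its membership in the rowspan of $A$ reduces to the corresponding statement for $A_j$ (which holds by the $k$-selfduality of $\A_j$ via Theorem~\ref{thm:eL}), lifted back to $A$ by padding with zeros in the other blocks; and the homogenizing rows of $A$ in the block form recover exactly the padding. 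The vectors $e_L$ from different blocks assemble compatibly because the rowspan of $A$ contains each block's rowspan extended by zeros.

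The main obstacle I anticipate is the bookkeeping of the collinearity lines $L_j$ across blocks, specifically verifying that no line through the origin can contain $b_i$'s drawn from two different blocks. This is the heart of the argument and relies on the fact that the block-respecting basis makes vectors from distinct blocks lie in complementary coordinate subspaces, hence never collinear; once this is clean, the Cayley structure of the join (already noted to be $s$-Cayley, and refined here to a $c_k$-Cayley structure matching the partition into lines) falls out and Theorem~\ref{thm:eL} applies directly. I would therefore spend the bulk of the proof establishing the clean block decomposition of $\Ker A^{(k)}$ and deducing that the partition of $\{0,\dots,N\}$ into blocks refines into the partition by lines $L_j$, after which both $k$nap-ness and the $e_L$-condition are routine consequences of the hypotheses on the $\A_i$.
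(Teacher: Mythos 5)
Your proposal is correct and follows essentially the same route as the paper: the paper's (very terse) proof likewise rests on the direct sum decomposition $\Ker A^{(k)}=\Ker A_1^{(k)}\oplus\cdots\oplus\Ker A_s^{(k)}$ coming from the block structure, then invokes Lemma~\ref{lem:knap} for the $k$nap property and Theorem~\ref{thm:eL} for selfduality. Your write-up simply makes explicit the details the paper leaves implicit (why mixed monomials kill cross-block rows, why no line can contain $b_i$'s from two blocks, and why the padded $e_L$'s lie in the rowspan of $A$), all of which are correct.
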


\begin{proof} 
The proof of Proposition~\ref{prop:jk} follows from Lemma~\ref{lem:knap} and
the characterization in Theorem~\ref{thm:eL}, since
the kernel of $A^{(k)}$ is the direct sum of the kernels of the associated matrices $A_1^{(k)},\dots, A_s^{(k)}$.
\end{proof}
 
\begin{example}\label{ex:curves}
Let $\A_1=\{(1, a_0),\dots, (1,a_{k+1})\}$ and $\A_2=\{(1,a'_0), \dots, (1, a'_{k+1})\}$, where
$a_0<\cdots < a_{k+1}$ and $a'_0<\cdots < a'_{k+1}$.
If the elements in  $\A_1$ are coprime, then the associated toric variety
$X_{\A_1}$ is a rational curve  of degree $a_{k+1}-a_0$, which
is smooth if and only if $a_1-a_0=a_{k+1}-a_k=1$ (and similarly for $\A_2$).
The configurations ${\A_1}, {\A_2}$ are  $k$nap  with $\dim \Ker A_i^{(k)}=1$. 
Hence they are $k$-selfdual by Proposition~\ref{prop:rCayley} (i).  
(This follows from Theorem~\ref{th:dim} (b) as well, since the $k$th dual 
of a (non degenerate) curve in $\mathbb P^{k+1}$ has dimension 1.) 
Their  join $X_\A=\opJ(X_{\A_1},X_{\A_2})$ is  a $k$-selfdual 
threefold by Proposition~\ref{prop:jk}, with $\dim \Ker A^{(k)}=2$.

Geometrically, this situation can be explained by the fact that the $k$-osculating spaces to $X_\A$ 
at any point on  the line joining a point in $X_{\A_1}$ 
and a point in $X_{\A_2}$ (but not on $X_{\A_1}$ or $X_{\A_2}$) 
 is equal to the join of the $k$-osculating spaces to $X_{\A_1}$ and $X_{\A_2}$ at those points. 
So each point on this line corresponds to points in a linear space in the dual space, and vice versa, 
each $k$th osculating space is $k$-osculating at all points on a line.
\end{example}
 
\subsection{Cayley configurations}

We present  a family of toric $k$-selfdual examples which are not joins, but for which the dimension $c_k$ of the
kernel of $A^{(k)}$ can be arbitrarily high.

\begin{example} \label{ex:cayleyP}
Let  $r\ge 2$, and let $d_1\le \dots \le d_r$ be positive integers. Let $\A_i=\{0,1,\ldots,d_i\}$ 
for $i=1,\dots,r$ be the configuration of lattice points in the polytope $d_i [0,1]$. Now
consider the $r$-Cayley configuration 
$\A=\text{Cayley} (\A_1,\ldots, \A_r)$. Then $\A$ defines a rational normal scroll $X_\A$ and is $k$nap if $k\le d_1$. 
If $k= d_1$, then $X_\A$ is a $k$-selfdual toric variety if and only 
if $k=d_1=\cdots =d_r$, i.e., if and and only if $X_\A$ is a \emph{balanced} rational normal scroll.
This follows from the results of \cite{PS84}, see also Proposition~4.1 in \cite{DDP11}. 
Note that $c_k= r-1$
can take any value as  $r$ varies. Note also that in this case all $\A_i$ 
are equal, and so the toric variety associated to the Cayley configuration is the product
$\pr{r-1} \times X_{\A_1}$.
\end{example}

We have the following general result. 

\begin{prop}\label{prop:product}
Let $k, r \ge 2$. Consider a lattice configuration $\B \subset \Z^d$ of cardinality 
$m +1$ such that the general $k$th osculating space of $X_{\B}$ is
the whole ${\mathbb P}^m$ and $\dim \Ker B^{(k-1)}= c_{k-1}(B)=1$.
Call $\A= \Cayley(\B, \dots, \B)$ ($r$ times), so that
$X_\A = {\mathbb P}^{r-1} \times X_\B\subset \mathbb P^{r(m+1)-1}$.
Then, $X_\A$ is $k$-selfdual if and only if $X_\B$ is $(k-1)$-selfdual.
\end{prop}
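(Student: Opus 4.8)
The plan is to use the combinatorial characterization of Theorem~\ref{thm:eL} together with the block structure of the matrix $A^{(k)}$ when $\A = \Cayley(\B,\dots,\B)$. First I would set up coordinates: label the lattice configuration $\A$ by pairs $(j,b)$ with $j\in\{1,\dots,r\}$ indexing the copy and $b$ ranging over $\B=\{b_0,\dots,b_m\}$, so that the columns of $A^{(k)}$ are naturally grouped into $r$ blocks of size $m+1$. Because $X_\A = \pr{r-1}\times X_\B$, the coordinate ring is a Segre-type product and the $k$th osculating structure of $\A$ decomposes according to the bidegree $(a,c)$ with $a+c \le k$, where $a$ counts the degree in the $\pr{r-1}$ factor and $c$ the degree in the $X_\B$ factor. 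The key computation is to understand $\Ker A^{(k)}$ in terms of the data of $\B$ at osculating order $k-1$; the hypotheses that the general $k$th osculating space of $X_\B$ is all of $\pr{m}$ (so $B^{(k)}$ has full column rank $m+1$) and that $c_{k-1}(B)=1$ are exactly what control this.

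The main steps I would carry out, in order, are as follows. First I would show that $\A$ is $k$nap: a vector $e_{(j,i)}$ lies in the rowspan of $A^{(k)}$ only if it comes from a polynomial of degree $\le k$ in the $r+d$ Cayley variables, and I would argue using the product structure and the hypothesis $c_{k-1}(B)=1$ (equivalently, that $X_\B$ is not $(k-1)$nap in a degenerate way, but controlled) that no such pyramid relation exists for $\A$; this should reduce to the $k$nap-ness of $\B$ at the appropriate order. Second, and this is the heart of the argument, I would compute $\Ker A^{(k)}$. The monomials of degree $\le k$ on $\pr{r-1}\times X_\B$ split into those of $X_\B$-degree exactly $k$ (which, by the assumption that the general $k$th osculating space of $X_\B$ is all of $\pr m$, impose the full rank $m+1$ on each block and hence contribute no kernel internally) and those of lower $X_\B$-degree. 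The relations in $\Ker A^{(k)}$ must therefore live in the part governed by $B^{(k-1)}$, whose kernel is one-dimensional by hypothesis; tracking how the $r$ Cayley blocks interact, I expect to find that $\Ker A^{(k)}$ is built out of $r$ copies of the single kernel vector of $B^{(k-1)}$, one supported on each block. Third, I would invoke the line-partition description of Theorem~\ref{thm:eL}: the vectors $b_i\in\R^{c_k}$ split according to which block they sit in, and the lines $L_j$ they lie on encode precisely the $r$-Cayley structure of $\A$. The condition that every $e_L$ lies in the rowspan of $A$ then becomes the condition that the analogous vectors for $\B$ (at order $k-1$) lie in the rowspan of $B$, which by Theorem~\ref{thm:eL} applied to $\B$ is exactly the statement that $X_\B$ is $(k-1)$-selfdual.

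I expect the main obstacle to be the second step: precisely identifying $\Ker A^{(k)}$ and proving that it is isomorphic, as a configuration of row vectors $b_i$ together with their collinearity pattern, to the data coming from $\Ker B^{(k-1)}$. The subtlety is that osculating relations on $\pr{r-1}\times X_\B$ mix the two factors, and one must verify that the hypothesis ``general $k$th osculating space of $X_\B$ is all of $\pr m$'' genuinely kills every relation of full $X_\B$-degree while the hypothesis $c_{k-1}(B)=1$ pins down the remaining relations to a single essential vector per block. Once this identification is in place, the equivalence $X_\A$ is $k$-selfdual $\iff$ $X_\B$ is $(k-1)$-selfdual follows by matching the $k$nap and $e_L$-in-rowspan conditions of Theorem~\ref{thm:eL} for $\A$ against those for $\B$. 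The example of the balanced rational normal scroll (Example~\ref{ex:cayleyP}, with $\B=\{0,1,\dots,k\}$) should serve as the sanity check that the bookkeeping is correct, since there $X_\B$ is the rational normal curve of degree $k$, which is $(k-1)$-selfdual, and $\A$ is indeed $k$-selfdual.
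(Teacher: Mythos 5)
Your plan follows the same route as the paper's proof: exploit the block structure of $A^{(k)}$ for $\A=\Cayley(\B,\dots,\B)$, compute $\Ker A^{(k)}$ from the two hypotheses, and feed everything into Theorem~\ref{thm:eL}. However, the expected outcome of the computation you yourself identify as the heart of the argument is stated incorrectly. Write a kernel element as $(x_1,\dots,x_r)$ with $x_j\in\R^{m+1}$. The block-diagonal rows (the $r$ copies of $B^{(k-1)}$) force $x_j=\lambda_j v$, where $v$ generates $\Ker B^{(k-1)}$; but $A^{(k)}$ also contains the rows $(B^{(k-1,k)},\dots,B^{(k-1,k)})$, where $B^{(k-1,k)}$ consists of the rows of $B^{(k)}$ not in $B^{(k-1)}$, and these impose $\bigl(\sum_j\lambda_j\bigr)B^{(k-1,k)}v=0$. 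The hypothesis that the general $k$th osculating space of $X_\B$ is all of $\pr{m}$ says exactly that $B^{(k)}$ has full rank $m+1$, hence $B^{(k-1,k)}v\neq 0$, and the constraint $\sum_j\lambda_j=0$ is forced. So $\Ker A^{(k)}$ is $(r-1)$-dimensional, spanned by the difference vectors $(v,0,\dots,0,-v,0,\dots,0)$; it is \emph{not} built from ``$r$ copies of $v$, one supported on each block'' --- those vectors are individually not in the kernel, and that reading would give $c_k=r$ rather than the correct $c_k=r-1$. This is precisely where the full-osculation hypothesis does its work, and your sketch has it killing relations rather than creating the linear constraint that ties the blocks together.

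Your third step also misplaces where the logical content of the equivalence lives. With the correct kernel, the row vectors of Definition~\ref{def:eL} are $b_{(1,i)}=v_i(1,\dots,1)$ for the first block and $b_{(j,i)}=-v_i e_{j-1}$ for blocks $j\ge 2$, so (when all $v_i\neq 0$) the lines $L_1,\dots,L_r$ correspond exactly to the $r$ blocks, and each $e_{L_j}$ is the indicator vector of a block --- which lies in the rowspan of $A$ \emph{automatically}, by the Cayley structure of $\A$. Likewise on the $\B$ side: since $c_{k-1}(B)=1$, there is only one line and $e_L=(1,\dots,1)$ is always a row of $B$. So the $e_L$ conditions are vacuous on both sides; they do not ``become'' one another, and the entire equivalence reduces to showing that $\A$ is $k$nap if and only if $\B$ is $(k-1)$nap. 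That, in turn, falls out of the kernel description: the $(j,i)$ coordinate of every kernel vector vanishes exactly when $v_i=0$. Your plan would still reach the correct conclusion if the kernel computation were carried out honestly (the rank count $\rk A^{(k)}=\rk B^{(k)}+(r-1)\rk B^{(k-1)}=rm+1$ would expose the error), but as written both the predicted kernel and the claimed transfer of the $e_L$ condition are wrong, and these are the two places the proof actually has to be right.
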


Note that $X_\A$ is smooth if and only if $X_\B$ is smooth.

\begin{proof}
Let $B \in \Z^{(d+1)\times (m+1)}$  denote the matrix of $\B$.  The matrix
\[\left(\begin{array}{cccccc}
B^{(k-1)} &   0  &   0  &   \cdots   &  0&   0\\
0   &  B^{(k-1)} &  0 &  \cdots  &  0&  0\\
\vdots & \vdots&\vdots & & \vdots & \vdots \\
0   &  0  & 0 & \cdots &  B^{(k-1)} &   0\\
 u_1B^{(k)}  & u_2B^{(k)} & \cdots  & \cdots  &  u_{r-1}B^{(k)}  &  B^{(k)}
\end{array}\right)
\]
determines the $k$th osculating space to $X_\A$ at a point of a (general) ruling, 
where $(u_1: \cdots :u_{r-1}:1)\in \mathbb P^{r-1}$ parameterizes the points of the ruling. 
It follows that we can write 
\begin{equation}\label{arr:B1}
A^{(k)}=
\left(\begin{array}{cccccc}
B^{(k-1)} &   0  &   0  &   \cdots   &  0&   0\\
0   &  B^{(k-1)} &  0 &  \cdots  &  0&  0\\
0   &  0  & 0 & \cdots &  B^{(k-1)} &   0\\
0   &  0  & 0 & \cdots & 0 &   B^{(k-1)}\\
 B^{(k-1,k)}  & B^{(k-1,k)} & \cdots  & \cdots  &  B^{(k-1,k)}  &  B^{(k-1,k)}
\end{array}\right)
\end{equation}
where $B^{(k-1,k)}$ is equal to the matrix obtained by removing $B^{(k-1)}$ from $B^{(k)}$. 


We identify $\Ker A^{(k)}$ in terms of $\Ker B^{(k)}$.
The hypothesis on the $k$th osculating spaces of $X_\B$ translates to
$\rk B^{(k)} = m+1$, this is the rank of the matrix with rows in $B^{(k-1)}$ and in $B^{(k-1,k)}$. 
Also, as $c_{k-1}(B)=1$, we have that $\rk B^{(k-1)} = m$.
Let $v \in \Z^{m+1}$ be a generator of $\Ker B^{(k-1)}$.
It is clear that all the vectors of the form $(v,0,\dots, 0, -v, 0, \dots , 0)$
lie in $\Ker A^{(k)}$. Denote by $V$ the vector space they generate, 
which has dimension $r-1$.
We claim that our hypotheses imply $V = \Ker A^{(k)}$. We check that they have
the same dimension. In fact,
$A^{(k)} \in \Z^{\binom{n+k}k \times r(m+1)}$ has rank equal to $\rk B^{(k)} +
(r-1) \rk B^{(k-1)} =m+1 +(r-1) m = rm +1$. Then $c_k(A) = r(m+1) - rm -1  = r-1$, as wanted.

We need to check that the conditions in Theorem~\ref{thm:eL} hold for $A$ 
if and only if they hold for $B$. Indeed, the columns of the matrix 
\begin{equation}\label{arr:A1}
\left(\begin{array}{rrrr}
v &   v &    \cdots   &    v\\
-v  &  0&   \cdots  &  0\\
0   &  -v &  \cdots &  0\\
\vdots & \vdots & \vdots  & \vdots \\
0   &  0  &  \cdots & -v\\
\end{array}\right)
\end{equation}
give a basis of $\Ker A^{(k)}$.  
It is clear that the first $m+1$ row vectors lie in the line $L_1$ generated by
$(1, \dots, 1)$, the next $m+1$ row vectors lie in the line $L_2$ generated by
$(1, 0, \dots, 0)$, and so on. The last $m+1$ row vectors lie in the line $L_r$ generated
by $(0, \dots, 0, 1)$.  As $\A= \Cayley(\B, \dots, \B)$,  the corresponding vectors $e_{L_i}$ lie in the row span of $A$.  
Then, $\A$ is $k$-selfdual if and only if $\A$ is $k$nap.  
On the other side, $\B$ is $(k-1)$-selfdual if and only if it is $(k-1)$nap because
$c_{k-1}(B)=1$.

Now, if $\A$ is $k$nap,  $A^{(k)}$ is not a pyramid,
 and clearly $B^{(k-1)}$ cannot be a pyramid,
that is, $\B$ is $(k-1)$nap.  Reciprocally, if $\B$ is $(k-1)$nap and $r\ge 2$, 
it follows from the shape of $A^{(k)}$ in~\eqref{arr:B1} that $\A$ is $k$nap.
\end{proof}

\begin{example}\label{ex:Valles}
Taking $r=2$, we can iterate the construction in~Proposition~\ref{prop:product}
 to get the following example in~\cite{V06}. In this case, the dimension of the kernel does not increase as $k$ increases.
As shown by Vall\`es in Lemma~ 4.1 in \cite{V06},  the Segre embedding $(\mathbb P^1)^n \to \mathbb P^{2^n-1}$
 is $(n-1)$-selfdual. In fact, the embedding is defined by all monomials of degree at most $n$ in variables $x_1,\ldots,x_n$
 with no repeated factor, i.e., by the lattice points $\mathcal A$ equal to the vertices of the unit $n$-cube. 
Since ${\mathbb P^1} \times \mathbb P^1$ is $1$selfdual with $c_1=1$,
it follows by induction writing
$(\mathbb P^1)^n= {\mathbb P^1} \times (\mathbb P^1)^{n-1}$ and using 
Proposition~\ref{prop:product}, that $\A$ is $(n-1)$-selfdual (with $c_{n-1}=1$).
\end{example}

\subsection{Using the Cayley-Bacharach theorem} \label{ex:CB}

We give a family of examples where selfduality is a consequence of the well known 
Cayley--Bacharach theorem~\cite[Thm. CB6]{EGH} (see~\cite{GH} for a proof
using the Euler--Jacobi vanishing condition for the associated residues).

\begin{thm}[Cayley-Bacharach]\label{th:CB}
Given $d_1, \dots, d_n \ge 1$, define the critical degree $\rho=\sum_{i=1}^m d_i - (n+1)$.
Assume $H_1, \dots, H_n$ are hypersurfaces in $\mathbb P^n$ of respective degrees $d_1, \dots, d_n$ intersecting
in a configuration $C$ of  $d_1 \cdots d_n$ points (necessarily simple).  If a hypersurface $H$ of degree
$\rho$ contains all the points in $C$ but one, then $C \subset H$.
\end{thm}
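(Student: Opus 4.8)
The plan is to deduce the statement from the \emph{Euler--Jacobi vanishing theorem}, which is exactly the residue condition referred to in \cite{GH}. First I would fix homogeneous forms $f_1, \dots, f_n$ of degrees $d_1, \dots, d_n$ defining $H_1, \dots, H_n$. Since $C = H_1 \cap \dots \cap H_n$ is finite of the full B\'ezout cardinality $d_1 \cdots d_n$, the $f_i$ form a regular sequence, so $C$ is a reduced (arithmetically Gorenstein) complete intersection and every one of its points is simple. After a generic linear change of coordinates I may assume that neither any point of $C$ nor any common zero of the leading forms of the $f_i$ lies on the hyperplane at infinity $\{x_0 = 0\}$; dehomogenizing then yields affine polynomials $F_1, \dots, F_n \in \K[x_1, \dots, x_n]$ of degrees $d_1, \dots, d_n$ with finitely many common zeros, all simple, and none at infinity.

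Next I would invoke the Euler--Jacobi relation for this configuration. Let $J := \det(\partial F_i / \partial x_j)$ be the affine Jacobian, a polynomial of degree $\sum_{i=1}^n (d_i - 1) = \sum_{i=1}^n d_i - n$; since $C$ is reduced, $J(p) \neq 0$ for every $p \in C$. The theorem asserts that for any polynomial $G$ with $\deg G \leq \sum_{i=1}^n d_i - (n+1) = \rho$ one has
\[
\sum_{p \in C} \frac{G(p)}{J(p)} = 0.
\]
I would justify this via the global residue theorem on $\mathbb{P}^n$ applied to the meromorphic $n$-form $\omega = G\, dx_1 \wedge \dots \wedge dx_n / (F_1 \cdots F_n)$: the sum of the local Grothendieck residues of $\omega$ over all of its poles vanishes, the residue at a simple point $p \in C$ equals $G(p)/J(p)$, and the degree bound $\deg G \leq \rho$ is precisely what guarantees that $\omega$ has no pole, and hence no residue, along the hyperplane at infinity, so that only the points of $C$ contribute.

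Finally I would specialize to $H$. Let $G$ be the dehomogenization of a form $g$ of degree $\rho$ defining $H$, and let $q$ be the one point of $C$ allowed to lie off $H$, so that $G(p) = 0$ for all $p \in C \setminus \{q\}$. The Euler--Jacobi relation then collapses to $G(q)/J(q) = 0$, and because $J(q) \neq 0$ we conclude $G(q) = 0$, i.e.\ $q \in H$. Hence $C \subset H$, as claimed.

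The main obstacle is the Euler--Jacobi vanishing itself, and within it the careful verification that $\omega$ acquires no residue along $\{x_0 = 0\}$: this is the step that pins down the critical degree to the exact value $\rho = \sum_{i=1}^n d_i - (n+1)$, one less than $\deg J$. A residue-free alternative would instead exploit the Gorenstein structure of the homogeneous coordinate ring $S/(f_1, \dots, f_n)$ of $C$, where $\rho$ appears as the socle degree of the associated Artinian reduction and the Cayley--Bacharach property becomes a one-dimensionality statement for a graded piece of the canonical module; that route is conceptually clean but demands more commutative-algebra machinery than the transparent residue computation.
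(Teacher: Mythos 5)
Your proof is correct and follows precisely the route the paper itself points to: the paper does not prove Theorem~\ref{th:CB}, but cites it as \cite{EGH}*{Thm.\ CB6} and refers to \cite{GH} for a proof via the Euler--Jacobi vanishing condition for the associated residues, which is exactly the argument you reconstruct (regular sequence and simplicity from B\'ezout, generic hyperplane at infinity, global residue theorem for $\omega = G\,dx_1\wedge\dots\wedge dx_n/(F_1\cdots F_n)$ with the degree bound $\deg G\le\rho$ killing the contribution at infinity, then collapsing the sum to $G(q)/J(q)=0$). No gap; your reading of the sum $\sum_{i=1}^m d_i$ in the statement as $\sum_{i=1}^n d_i$ correctly fixes a typo in the paper.
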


The following Lemma is an easy consequence of Theorem~\ref{th:CB}.

\begin{lemma} The following classical configurations are selfdual.
\begin{enumerate}
\item[(1)] Consider two cubic forms $C_1, C_2 \in \mathbb Z[x_1,x_2]$ such that their
intersection is equal to nine lattice points $a_0, \ldots, a_8 \in \Z^2$. 
Then,  $\A_C=\{a_0,\ldots,a_8\}$ gives a $3$-selfdual surface. 
\item[(2)] Consider three quadrics $Q_1, Q_2,Q_3\in \mathbb Z[x_1,x_2,x_3]$ such that their intersection is equal 
to eight lattice points $a_0,\ldots,a_7\in \Z^3$. Then $\A_Q=\{a_0,\ldots,a_7\}$ gives a $2$-selfdual threefold. 
\end{enumerate}
\end{lemma}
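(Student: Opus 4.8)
The plan is to apply the Cayley--Bacharach theorem to verify the combinatorial criterion from Proposition~\ref{prop:Hilbert}. Both configurations in the lemma arise as complete intersections in $\mathbb{P}^n$, so the key observation is that the critical degree $\rho$ of Theorem~\ref{th:CB} is precisely the degree $k$ at which we wish to establish $k$-selfduality. In case (1), with $n=2$, $d_1=d_2=3$, we have $\rho = 3+3-3 = 3$, and the configuration $C$ consists of $d_1 d_2 = 9 = \binom{2+3}{3}-1$ points; in case (2), with $n=3$, $d_1=d_2=d_3=2$, we have $\rho = 2+2+2-4 = 2$, and $C$ consists of $d_1 d_2 d_3 = 8 = \binom{3+2}{2}-2$ points. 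First I would record that the number of points is $N+1$ (so $N=8$ in case (1), $N=7$ in case (2)) and that $\binom{n+k}{k}-N$ equals $1$ in both cases, which is the target value of the Hilbert functions in~\eqref{eq:H}.

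Next I would check the hypothesis that neither $\A_C$ nor $\A_Q$ is $2$-Cayley, so that Proposition~\ref{prop:Hilbert} applies rather than requiring the more delicate analysis of Theorem~\ref{thm:eL}. For generic choices of the defining forms this holds because the complete-intersection points do not lie on two parallel hyperplanes; I would note this can be arranged (or is automatic for the intended classical configurations) and reduce to verifying the two Hilbert-function equalities in~\eqref{eq:H}. By the discussion preceding Proposition~\ref{prop:Hilbert}, $H_{I(\A)}(k)$ equals the number of independent conditions the points of $\A$ impose on forms of degree $k$, equivalently $\binom{n+k}{k}-\dim\{Q \in \mathbb{Q}[x]_{\le k} : Q(a_i)=0 \text{ for all } i\}$.

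The heart of the argument is the Cayley--Bacharach dichotomy. For the full configuration $C$ (with $|C| = \binom{n+k}{k}-1$), the space of degree-$k$ forms vanishing on $C$ has dimension at least $1$ by a dimension count, and I would argue it is exactly $1$: any two such forms would have to agree up to scalar, since otherwise the points would fail to be a complete intersection of the expected type. This gives $H_{I(\A)}(k) = \binom{n+k}{k}-1 = N$ (using $|C|=N+1$, so $\binom{n+k}{k}-1 = N$). The crucial step is then the deleted configuration $C \setminus \{a_j\}$: Theorem~\ref{th:CB} says that \emph{any} hypersurface $H$ of degree $\rho=k$ passing through all points of $C$ except possibly $a_j$ must in fact contain all of $C$. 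This forces the space of degree-$k$ forms vanishing on $C\setminus\{a_j\}$ to coincide with those vanishing on all of $C$, whence $H_{I(\A\setminus\{(1,a_j)\})}(k) = H_{I(\A)}(k) = N$, which is exactly the equality~\eqref{eq:H} needed for every $j$.

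\textbf{The main obstacle} is likely to be the bookkeeping around the value of the Hilbert function for the full configuration and confirming that it is exactly $N$ (not larger), together with the non-$2$-Cayley hypothesis. The Cayley--Bacharach theorem handles the deletion-invariance cleanly, but one must independently pin down $H_{I(\A)}(k)=N$; this follows from the complete-intersection structure (the points impose $|C|-1$ independent conditions in degree $\rho$, a standard consequence of the theory), and I would invoke exactly that to close the gap. Once both equalities in~\eqref{eq:H} are in hand, Proposition~\ref{prop:Hilbert} yields $k$-selfduality in each case, completing the proof.
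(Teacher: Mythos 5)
You follow the same route as the paper --- verify condition \eqref{eq:H} and apply Proposition~\ref{prop:Hilbert}, with Theorem~\ref{th:CB} supplying the deletion-invariance --- and that second half of your argument is correct. The genuine error is in the first half, where you pin down $H_{I(\A)}(k)$. You claim the space of degree-$k$ forms vanishing on $C$ is exactly one-dimensional (``any two such forms would have to agree up to scalar''). That is false: the defining forms themselves, $C_1,C_2$ in case (1) and $Q_1,Q_2,Q_3$ in case (2), are linearly independent forms of degree exactly $k$ vanishing on all of $C$; and since the points are a reduced complete intersection, the ideal of $C$ is generated by these forms, so its degree-$k$ piece is precisely their span. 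Hence the vanishing space has dimension $2$ in case (1) and $3$ in case (2), not $1$. The arithmetic already betrays the problem: your count would give $H_{I(\A)}(k)=\binom{n+k}{k}-1=9$, while the target value is $N=8$ in case (1) and $N=7$ in case (2), so \eqref{eq:H} would \emph{fail}; the identities you assert, $\binom{n+k}{k}-1=N$ and $\binom{n+k}{k}-N=1$, are both false (the latter equals $2$ and $3$ respectively), and $|C|=\binom{n+k}{k}-1$ holds only in case (1). With the correct count one gets $H_{I(\A)}(k)=10-2=8=N$, resp.\ $10-3=7=N$, which is exactly what the paper records, and then your Cayley--Bacharach step closes the argument. (Your final parenthetical --- that the points impose $|C|-1$ independent conditions in degree $\rho$ --- is the correct statement, but it contradicts the dimension count in your main paragraph; had you used it consistently the proof would go through.)

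A secondary issue: you propose to secure the non-$2$-Cayley hypothesis of Proposition~\ref{prop:Hilbert} ``for generic choices''. That does not prove the lemma as stated, and in case (2) the hypothesis can genuinely fail: the vertices of the unit cube $\{0,1\}^3$ are the complete intersection of the three quadrics $x_i(x_i-1)$, $i=1,2,3$, and they form a $2$-Cayley configuration (this is the Segre embedding of $(\pr{1})^3$, cf.\ Example~\ref{ex:Valles}). The correct remedy --- left implicit in the paper as well, since the paper's proof also does not check this hypothesis --- is that only one implication of Proposition~\ref{prop:Hilbert} is being used, and that implication never needs the Cayley hypothesis: condition \eqref{eq:H} gives $\rk A^{(k)}=N$, hence $c_k=1$, and gives $k$nap-ness by Lemma~\ref{lem:knap}, whence $X_\A$ is $k$-selfdual by Proposition~\ref{prop:rCayley}(i).
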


\begin{proof}
We first check that condition~\eqref{eq:H} in Proposition~\ref{prop:Hilbert}
is satisfied. Note that by our assumption about the form of $\A_C$, the left kernel
of $A_C^{(3)}$ has dimension $2= \binom{2+3}{3}-8$.
  In the second case, the left
kernel of $A_Q^{(2)}$ has by hypothesis dimension $3 = \binom{3+2}{2}-7$.

Note that in both cases, the respective critical degrees are
$3-1+3-1-1=3$ and $2-1 + 2-1+ 2-1 -1=2$. So, by the Cayley--Bacharach theorem 
any  cubic through eight of the points in $\A_C$ also passes through the remaining point 
(this is originally a result by Chasles) and respectively, any quadric through seven of the points in $\A_Q$ 
also passes through the remaining point. Thus, condition~\eqref{eq:H} in 
Proposition~\ref{prop:Hilbert} is also satisfied. 
 We deduce that
that $X_{\A_C}$ is $3$-selfdual and $X_{\A_Q}$ is $2$-selfdual. 
\end{proof}

In this same framework, we get the following general result.

\begin{thm}\label{th:complete}
Let $\A$ be a lattice point configuration such that $X_\A$ is 
equal to a Segre--Veronese embedding of the following forms:
\begin{equation}\label{eq:nor}
 \mathbb P^1 \times \cdots \times \mathbb P^1 \hookrightarrow \mathbb P^N, 
\end{equation}
or
\begin{equation}\label{eq:or}
\mathbb P^{r-1} \times \mathbb P^1 \times \cdots \times \mathbb P^1 
\hookrightarrow \mathbb P^N,  \quad r \ge 3,
\end{equation}
where there are $m\ge 1$ copies of $\mathbb P^1$'s, and the embeddings are of type 
$(\ell_1,\dots,\ell _m)$ in~\eqref{eq:nor} and of type $(1, \ell_1,\dots,\ell _m)$ in~\eqref{eq:or},  with $\ell_i\ge 1$. 
Set $k:=\sum_{j=1}^m \ell_i =k_\ell +1$. Then,  $X_\A$ is a 
smooth, equivariantly embedded toric variety which is $k_\ell$-selfdual in case~\eqref{eq:nor} 
with $c_{k_\ell}=1$, and $k$-selfdual in case~\eqref{eq:or} with
$c_k=r-1$, with $\A$ complete.
\end{thm}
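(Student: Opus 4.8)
The plan is to handle the two cases by reducing them to machinery already developed in the paper, using the product structure and Proposition~\ref{prop:product} as the inductive engine. First I would treat case~\eqref{eq:nor}, the Veronese--Segre embedding of $\mathbb P^1 \times \cdots \times \mathbb P^1$ of type $(\ell_1,\dots,\ell_m)$. I would argue by induction on $m$, or alternatively by iterating Proposition~\ref{prop:product} with $r=2$ as in Example~\ref{ex:Valles}. The base case $m=1$ is a single rational normal curve of degree $\ell_1$ in $\mathbb P^{\ell_1}$, whose lattice configuration $\{0,1,\dots,\ell_1\}$ is $k_\ell$nap with $c_{k_\ell}=1$ (here $k_\ell = \ell_1$), hence $k_\ell$-selfdual by Proposition~\ref{prop:rCayley}(i). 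For the inductive step I would write the embedding of type $(\ell_1,\dots,\ell_m)$ as a product of a $\mathbb P^1$ embedded by $\ell_m$ with the smaller Veronese--Segre $X_\B$ of type $(\ell_1,\dots,\ell_{m-1})$, and verify that $X_\B$ satisfies the hypotheses of Proposition~\ref{prop:product} (namely that its general $(k_\ell)$th osculating space fills $\mathbb P^m$ and $c_{k_\ell-1}(B)=1$); selfduality of the product then follows.

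The delicate bookkeeping is keeping track of the correct degree at each stage. The key identity $k = \sum_{j=1}^m \ell_j$ is exactly what makes the osculating spaces fill the ambient projective space: a product of rational curves of degrees $\ell_1,\dots,\ell_m$ is generically $k$-jet spanned precisely when one differentiates to total order $k$, and the first order at which $c_k$ jumps to $1$ (the Laplace/Togliatti-type degeneracy) is this top degree. I would verify that $A^{(k_\ell)}$ has a one-dimensional kernel by an explicit computation of the jet matrix as a tensor product (Kronecker product) of the one-variable jet matrices, so that $\rk A^{(k_\ell)} = N$ and completeness of $\A$ (all lattice points of the box $\prod [0,\ell_j]$ are present) is automatic from the Segre--Veronese construction. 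Smoothness holds because each factor is smooth and the product of smooth toric varieties is smooth.

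For case~\eqref{eq:or}, the embedding $\mathbb P^{r-1} \times \mathbb P^1 \times \cdots \times \mathbb P^1$ of type $(1,\ell_1,\dots,\ell_m)$, I would apply Proposition~\ref{prop:product} directly with $\B$ the lattice configuration of the Veronese--Segre from case~\eqref{eq:nor}. That case established $X_\B$ is $k_\ell$-selfdual with $c_{k_\ell}(B)=1$ and that its general $k_\ell$th osculating space is all of $\mathbb P^m$; moreover $c_{k_\ell}(B)=1$ gives $c_{(k-1)}(B)=1$ since $k-1=k_\ell$. With $\A = \Cayley(\B,\dots,\B)$ ($r$ copies), Proposition~\ref{prop:product} yields that $X_\A$ is $k$-selfdual if and only if $X_\B$ is $(k-1)$-selfdual, which we have just shown, and it also gives $c_k = r-1$. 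Completeness of $\A$ and smoothness of $X_\A$ transfer from $\B$ by the remark following Proposition~\ref{prop:product}.

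The main obstacle I anticipate is establishing the two numerical hypotheses of Proposition~\ref{prop:product} for the Veronese--Segre configuration cleanly, that is, showing that the general $k_\ell$th osculating space of $X_\B$ of type $(\ell_1,\dots,\ell_m)$ equals the whole ambient space while the $(k_\ell-1)$st jet map has corank exactly one. The cleanest route is to exploit the tensor (Kronecker product) structure of the jet matrices under products of projective spaces: the $k$th jet map of a product factors through the tensor product of the jet maps of the factors, and the rank of $A^{(k)}$ for a product is controlled by the ranks of the factors' jet matrices at complementary orders. I would make this precise, check that the single jump to corank one occurs exactly at total degree $k=\sum \ell_j$, and feed the resulting rank count into the inductive step. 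Once this rank computation is in place, both cases follow formally from Propositions~\ref{prop:rCayley} and~\ref{prop:product} together with Theorem~\ref{thm:eL}.
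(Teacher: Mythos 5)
Your reduction of case~\eqref{eq:or} to case~\eqref{eq:nor} via Proposition~\ref{prop:product} is exactly what the paper does, and that part is fine. The gap is in case~\eqref{eq:nor}. Proposition~\ref{prop:product} only applies when $\A=\Cayley(\B,\dots,\B)$, i.e., when the factor being split off is a $\mathbb P^{r-1}$ embedded by $\mathcal O(1)$. In your inductive step you split off a $\mathbb P^1$ embedded by degree $\ell_m$; when $\ell_m\ge 2$ the resulting configuration is \emph{not} a Cayley configuration of copies of $\B$, so the proposition does not apply. Your induction therefore only covers types $(\ell_1,1,\dots,1)$ (which is essentially Example~\ref{ex:Valles}), not general $(\ell_1,\dots,\ell_m)$. (A smaller slip: in your base case $k_\ell=\ell_1-1$, not $\ell_1$; at order $\ell_1$ the jet matrix of the rational normal curve is a full Vandermonde matrix, so $c_{\ell_1}=0$ and there is no $\ell_1$th dual to speak of.)

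Your fallback, a direct computation of $\rk A^{(k_\ell)}$, is problematic in two ways. First, the truncated jet matrix of a product is not a Kronecker product of the factors' jet matrices: its row span is $\sum_{i+j\le k}\mathrm{Row}(A_1^{(i)})\otimes\mathrm{Row}(A_2^{(j)})$, a sum of tensor products of nested subspaces, and computing its dimension is a genuine argument, not bookkeeping. (The paper does it with the binomial basis $m_\alpha$ of Remark~\ref{rem:general}: the $m_\alpha$ with $\alpha$ in the box and $|\alpha|\le k_\ell$ restrict to linearly independent functions on $\A_\ell$, every $m_\alpha$ with some $\alpha_i>\ell_i$ vanishes identically on the box, and the only box point of degree $>k_\ell$ is the corner, whence $\rk A_\ell^{(k_\ell)}=|\A_\ell|-1$ and $c_{k_\ell}=1$.) You flag exactly this as ``the main obstacle'' and leave it as a sketch, but it is the core of the theorem. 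Second, and more seriously, even granting $c_{k_\ell}=1$, Proposition~\ref{prop:rCayley}(i) also requires $\A_\ell$ to be $k_\ell$nap, and your direct route never establishes this; with $c_{k_\ell}=1$, nap-ness says that the unique kernel vector has no zero coordinate, which is not automatic. This is precisely where the paper's key idea enters: the box $\prod_i[0,\ell_i]\cap\Z^m$ is the complete intersection of the $m$ hypersurfaces $\prod_{j=0}^{\ell_i}(x_i-j)=0$, whose critical degree is exactly $k_\ell$, so the Cayley--Bacharach Theorem~\ref{th:CB} shows that any polynomial of degree $\le k_\ell$ vanishing at all box points but one vanishes at the remaining one, which is $k_\ell$nap-ness by Lemma~\ref{lem:knap}. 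Alternatively, you could repair both gaps at once by exhibiting the kernel vector explicitly as $v_a=\prod_i(-1)^{a_i}\binom{\ell_i}{a_i}$ (the tensor product of one-variable finite-difference functionals, which annihilates all polynomials of degree $<\sum_i\ell_i$ and has all coordinates nonzero) --- but some such argument must actually be made.
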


\begin{proof}
We first prove the assertion about the embedding in~\eqref{eq:nor}.
Given $\ell_1, \dots, \ell_n \ge 1$, 
consider the parallelepiped $\Pi_\ell = [0, \ell_1] \times \dots \times [0, \ell_n]$.
Denote by $\A_\ell$ the configuration of lattice points in $\Pi_\ell$, and set $k_\ell :=\ell_1+\dots+\ell_n-1$.
We want to see that $c_{k_\ell}=1$ and  $\A_\ell$ is $k_\ell$-selfdual.

Consider the polynomials $f_i (x) = \prod_{j=0}^{\ell_i}( x_i -j) \in \Q[x_1,\dots, x_n]$, 
where $i$ runs from $1$ to $n$. The homogenized polynomials  define, respectively, hypersurfaces 
$H_i$ in $\mathbb P^n_\mathbb Q$ of degree $d_i=\ell_i +1$. 
The intersection $H_1 \cdots H_n$ coincides with our configuration $\A_\ell$ 
(thought inside projective space) and the critical degree equals $k_\ell$. 
So, by Theorem~\ref{th:CB}, any polynomial of degree at most $k_\ell$
 vanishing at all points in $\A_\ell$ but one, also vanishes at the remaining point. 
This means that $\A_\ell$ is $k_\ell$nap by Lemma~\ref{lem:knap}.  

Note that all the points in $A_\ell$ define monomials with degree at most $k_\ell$, 
except for the ``corner'' point $(\ell_1, \dots, \ell_n)$ which has degree $k_\ell +1$.
Passing to the matrix
$(A_\ell)_m^{(k_\ell)}$ as in Remark~\ref{rem:general}, it is straightforward to see that 
$\rk A_\ell^{(k_\ell)}$ equals the cardinality of $\A_\ell$ minus one, that is, 
that $c_{k_\ell}=1$. It follows that $\A_\ell$ is $k_\ell$-selfdual by item (i) in Proposition~\ref{prop:rCayley}.

The second case in~\eqref{eq:or} follows by Proposition~\ref{prop:product}.
\end{proof}

 \begin{bibdiv}
\begin{biblist}

\bib{Betal15}{article}{
  AUTHOR = {T. Bogart},
author = {C. Haase}, 
author ={M. Hering},
author = {B. Lorenz},
author = {B. Nill},
author = {A.  Paffenholz}, 
author = {G. Rote},
author ={F. Santos},
author = {H. Schenck},
     TITLE = {Finitely many smooth {$d$}-polytopes with {$n$} lattice
              points},
   JOURNAL = {Israel J. Math. 207 (2015), 1:301--329}
}

\bib{BDR11}{article}{
author={M. Bourel},
author={A. Dickenstein},
author={A. Rittatore},
title={Self-dual projective toric varieties},
journal={J. London Math. Soc. (2) 84 (2011), 514--540}}

 \bib{DFS07}{article}{
author={A. Dickenstein},
author={E.M. Feichtner},
author={B. Sturmfels},
title={Tropical discriminants},
journal={J. Amer. Math. Soc. 20 (2007), no. 4, 1111--1133 (electronic).}}

\bib{DDP09}{article}{
author={A. Dickenstein},
author={S. Di Rocco},
author={R. Piene},
title={Classifying smooth lattice polytopes via toric fibrations},
journal={Adv. Math. 222 (2009), 240--254.},
}

\bib{DDP11}{article}{
author={A. Dickenstein},
author={S. Di Rocco},
author={R. Piene},
title={Higher order duality and toric embeddings},
journal={Ann. Inst. Fourier 64 (2014), 1:375--400.}
}

 \bib{Dye92}{article}{
author={R. H. Dye},
title={The extraordinary higher tangent spaces of certain quadric intersections},
journal={Proc. Edinburgh Math. Soc.  (2)  35 (1992), no. 3,  437--447}}

\bib{Ein}{article}{
author={L. Ein},
     TITLE = {Varieties with small dual varieties. {I}},
   JOURNAL = {Invent. Math. 86 (1986), 1:63--74},
}

\bib{EGH}{article}{
    AUTHOR = {D. Eisenbud}, 
  author = {M. Green}, 
   author = {J. Harris},
     TITLE = {Cayley-{B}acharach theorems and conjectures},
   JOURNAL = {Bull. Amer. Math. Soc. (N.S.) 33 (1996), 3:295--324}
}

 \bib{F-I02}{article}{
author={D. Franco},
author={G. Ilardi},
title={On a theorem of Togliatti},
journal={Int. Math. J.  2 (2002)), no. 4,  379--397}}

\bib{GKZ}{book}{
   author={I. M. Gel{\cprime}fand},
   author={M. M. Kapranov},
   author={A. V. Zelevinsky},
   title={Discriminants, resultants, and multidimensional determinants, Birkh\"auser Boston, 1994}
}

\bib{GH}{book}{
AUTHOR = {P. Griffiths},
author = {J. Harris},
     TITLE = {Principles of algebraic geometry,
 John Wiley \& Sons, Inc., New York, 1978}
      }

\bib{H60}{article}{
author={J. Harris},
title={Interpolation, Current developments in algebraic geometry, 
Math. Sci. Res. Inst. Publ., 59, Cambridge Univ. Press, Cambridge, 2012., 165--176.}
}

 \bib{Ila06}{article}{
author={G. Ilardi},
title={Togliatti systems},
journal={Osaka J. Math. 43 (2006), no. 1, 1--12}}

\bib{LM}{article}{
author={A. Lanteri},
author={R. Mallavibarrena},
title={Osculatory behavior and second dual varieties of Del Pezzo surfaces},
journal={Adv. Geom. 2 (2002), 4:345--363.}}

\bib{L10}{article}{
author = {B. Lorenz},
title = {Classification of smooth lattice polytopes with few lattice points},
journal ={Diplomarbeit, Freie Universit\"at Berlin,
Fachbereich Mathematik,  arXiv:1001.0514}
}

\bib{Lv16}{article}{
author = {S. Lvovski},
title = {Some remarks on osculating self-dual varieties},
journal ={arXiv:1602.07450}
}

\bib{MM16}{article}{
author={E. Mezzetti},
author={R. Mir\'o-Roig},
title={The minimal number of generators of a Togliatti system},
journal={To appear: Ann. Mat. Pura Appl., 2016}}

\bib{MMO13}{article}{
author={E. Mezzetti},
author={R. Mir\'o-Roig},
author={G. Ottaviani},
title={Laplace equations and the weak Lefschetz property},
journal={Canad. J. Math. 65 (2013), 3:634--654.}}

\bib{M95}{article}{
author={J. Mulliken},
title={Lotsa dots: self-dual affine monomial varieties},
journal={B. A. Thesis, Reed College, 1995}}

\bib{Pe00}{article}{
author={D. Perkinson},
title={Inflections of toric varieties},
journal={Michigan Math. J. 48 (2000), 483--515}}

\bib{P83}{article}{
author={R. Piene},
title={A note on higher order dual varieties, with an application to scrolls},
journal={In Singularities, Part 2,  (Arcata, Calif., 1981), 335--342, 
Proc. Sympos. Pure Math., 40, Amer. Math. Soc., Providence, RI, 1983}}
 
\bib{PS84}{article}{
author={R. Piene},
author={G. Sacchiero},
title={Duality for rational normal scrolls},
journal={Comm. Algebra 12 (1984), no. 9-10, 1041--1066}}

\bib{RVVZ01}{article}{
    AUTHOR = {F. Rodr{\'{\i}}guez Villegas}, 
author ={J. F. Voloch}, 
author = {D. Zagier},
     TITLE = {Constructions of plane curves with many points},
   JOURNAL = {Acta Arith. 99 (2001), no.1, 85--96}
}

\bib{Seg1907}{article}{
author={C. Segre},
title={Su una classe di superficie degl'iperspazii legate 
colle equazioni lineari alle derivate parziali di $2^\circ$ ordine},
journal={Atti Accad. Reale Scienze Torino, Vol. XLII (1907), 559.}}

\bib{Sis1911}{article}{
author={C. H. Sisam},
title={On three-spreads satisfying four or more homogeneous linear partial differential equations of the second order},
journal={Amer. J. Math. 33 (1911), no. 1--4, 97--128.}}

\bib{BSbook}
{book}{
   author={B. Sturmfels},
     title={Gr\"obner bases and convex polytopes},
   series={University lecture series, volume 8},
   publisher={American Mathematical Society},
   date={1996},
}

\bib{Ter1912}{article}{
author={A. Terracini}, 
title={Sulle $V_k$ che rappresentano pi\`u di  $\frac{k(k-1)}2$ equazioni di Laplace linearmente
indipendenti},
journal={Rend. Circ. Mat. Palermo 33 (1912), 176--186.}}

\bib{Tog29}{article}{
author={E. Togliatti},
title={Alcuni esempi di superficie algebriche degli iperspazi che rappresentano un'equazione de Laplace},
journal={Comm. Math. Helv. 1 (1929), 255--272.}}

\bib{V06}{article}{
author={J. Vall\`es},
title={Vari\'et\'es de type Togliatti},
journal={C. R. Acad. Sci. Paris, Ser. I 343 (2006), 411--414.}}

\end{biblist}
\end{bibdiv}

\end{document}